\theoremstyle{plain}
\newtheorem{theorem}{Theorem}[section]
\newtheorem{lemma}[theorem]{Lemma}
\newtheorem{proposition}[theorem]{Proposition}
\newtheorem{corollary}[theorem]{Corollary}
\theoremstyle{definition}
\newtheorem{definition}[theorem]{Definition}
\newtheorem{example}[theorem]{Example}
\DeclareMathOperator{\Q}{\mathbb Q}
\DeclareMathOperator{\vcsp}{VCSP}
\DeclareMathOperator{\csp}{CSP}
\DeclareMathOperator{\dom}{dom}
\DeclareMathOperator{\feas}{Feas}
\DeclareMathOperator{\g}{\Gamma}
\DeclareMathOperator{\ar}{ar}
\def\ifinlinemath#1#2{#2}
\def\mathshift{$}
\def\myshift#1${{\def\ifinlinemath##1##2{##1}\raisebox{0ex}[0ex][0ex]{\mathshift#1\mathshift}}}
\begin{document}

\title{Submodular Functions and 
	Valued Constraint \\ Satisfaction Problems over Infinite Domains}  \date{}

\author{Manuel Bodirsky \thanks{The first and second author have received funding from the European Research Council (ERC) under the European	Union's Horizon 2020 research and innovation programme (grant agreement No 681988, CSP-Infinity).}\\{\small  Institut f\"{u}r Algebra}\\{\small Technische Universit\"{a}t  Dresden} \and Marcello Mamino \footnotemark[1]\\{\small Dipartimento di Matematica}\\ {\small Universit\`{a} di Pisa} \and Caterina Viola \thanks{This author is supported by DFG Graduiertenkolleg 1763 (QuantLA).}\\{\small Institut f\"{u}r Algebra}\\ {\small Technische Universit\"{a}t  Dresden}}

%% Author with single affiliation.
%\author{Manuel Bodirsky \and Marcello Mamino \and Caterina Viola}
%\authornote{}          %% \authornote is optional;
%% can be repeated if necessary

%% Author with two affiliations and emails.
%\author{Marcello Mamino}
%% Author with two affiliations and emails.
%\author{Caterina Viola}
%\authornote{}
%% \authornote is optional;
%% can be repeated if necessary
%\orcid{nnnn-nnnn-nnnn-nnnn}             %% \orcid is optional
%% \email is recommended

\maketitle

\begin{abstract}
Valued constraint satisfaction problems (VCSPs) are a large class of combinatorial optimisation problems. It is desirable to classify the computational complexity of VCSPs depending on a fixed set of allowed cost functions in the input. Recently, the computational complexity of all VCSPs for finite sets of cost functions over finite domains has been classified in this sense. Many natural optimisation problems, however, cannot be formulated as VCSPs over a finite domain. 
We initiate the systematic investigation of infinite-domain VCSPs by studying the complexity of VCSPs for piecewise linear homogeneous cost functions. 
We show that such VCSPs can be solved in polynomial time when the cost functions are additionally submodular, and that this is indeed a maximally tractable class: adding any cost function that is not submodular leads to an NP-hard VCSP.
\end{abstract}

%
% The code below should be generated by the tool at
% http://dl.acm.org/ccs.cfm
% Please copy and paste the code instead of the example below.
%

%\tableofcontents

\section{Introduction}
In a \emph{valued constraint satisfaction problem (VCSP)} 
we are given a finite set of variables, a finite
set of cost functions that depend on these variables, and a cost $u$; the task is to find values for the variables such that the sum of the cost functions is less than $u$. 
By restricting the set of possible cost functions in the input, a great variety of computational optimisation problems can be modelled as a valued constraint satisfaction problem. By allowing the cost functions to evaluate to $+\infty$, we can even model `crisp' (i.e., hard) constraints on the variable assignments, 
and hence the class of (classical) constraint satisfaction problems (CSPs) is a subclass of the class of all VCSPs. 

If the domain is finite, the computational complexity of the VCSP has recently been classified for all sets of cost functions, assuming the Feder-Vardi conjecture for classical CSPs~\cite{KolmogorovThapperZivny15,GenVCSP15,KozikOchremiak15}. Even more recently, two solutions to the Feder-Vardi conjecture have been announced~\cite{ZhukFVConjecture,BulatovFVConjecture}. These fascinating achievements settle the complexity of the VCSP over finite domains.

Several outstanding combinatorial optimisation problems cannot be formulated as VCSPs over a finite domain, but they can be formulated
as VCSPs over the domain ${\mathbb Q}$, the set of rational numbers. 
One example is the famous linear programming problem, where the task is to optimise a linear function subject to linear inequalities. This can be modelled as a VCSP by allowing unary linear cost functions and cost functions of higher arity to express the crisp linear inequalities. Another example is 
the minimisation problem for sums of piecewise linear 
convex cost functions (see, e.g., \cite{BoydVandenberghe}). Both of these problems can be solved in polynomial time, e.g.~by the ellipsoid method (see, e.g., \cite{GroetschelLovaszSchrijver}). 

Despite the great interest in such concrete VCSPs over the rational numbers in the literature, VCSPs over infinite domains have not yet been studied systematically. 
In order to obtain general results we
need to restrict
the class of cost functions that we investigate,
because without any restriction it is already hopeless to classify the complexity of infinite-domain CSPs (any language over a finite alphabet is polynomial-time Turing equivalent to an infinite domain CSP~\cite{BodirskyGrohe}). One restriction that captures a variety 
of optimisation problems of theoretical and practical interest is the class of all \emph{piecewise linear homogeneous} cost functions over ${\mathbb Q}$,
defined below. We first illustrate by an example the type of cost functions that we want to 
capture in our framework. 

\begin{example}\label{expl:intro1}
An internet provider charges the clients depending on the amount of data $x$ downloaded and the amount of data $y$ that is uploaded. The cost function of the provider could be the partial function $f \colon {\mathbb Q}^2 \to {\mathbb Q}$ given by 
\[f(x,y) := \begin{cases} 3x & \text{ if } 0 \leq y < 2x \\
\frac{3}{2}y & \text{ if } 0 \leq 2x \leq y \\
\text{undefined} & \text{otherwise.}
\end{cases}\]
\end{example}

%Formally, we consider cost functions as 
%partial maps from ${\mathbb R}^n$ to 
%${\mathbb R}$, and if $f(x_1,\dots,x_n)$ is undefined for $x_1,\dots,x_n \in {\mathbb R}$
%we write $f(x_1,\dots,x_n) = \infty$. 
A partial function 
$f \colon {\mathbb Q}^n \to {\mathbb Q}$ is called
\emph{piecewise linear homogeneous (PLH)} 
if it is first-order definable over the structure ${\mathfrak L} := ({\mathbb Q};<,1,(c\cdot)_{c \in {\mathbb Q}})$; being undefined at 
$(x_1,\dots,x_n) \in {\mathbb Q}^n$ is interpreted as $f(x_1,\dots$, $x_n) = +\infty$. 
The structure $\mathfrak L$ has quantifier elimination (see Section~\ref{sect:piecewise-hom}) and hence there are finitely many regions
such that $f$ is a homogeneous linear polynomial in each region; this is the motivation for the name \emph{piecewise linear homogeneous}. 
The cost function from Example~\ref{expl:intro1} is PLH. 

%{\bf Results.} 

The cost function in Example~\ref{expl:intro1} 
satisfies an additional important property:
it is \emph{submodular} (defined in Section~\ref{sect:submodular}). 
Submodular cost functions naturally appear in several scientific fields such as, for example, economics, game theory, machine learning, and computer vision, and play a key role in operational research and combinatorial optimisation (see, e.g.,~\cite{Fujushige}).   
Submodularity also plays an important role for 
the computational complexity of VCSPs over finite domains, and guided the research on VCSPs for some time (see, e.g., \cite{cohen2006complexity,JKT11}), 
even though this might no longer be visible in the final classification obtained in~\cite{KolmogorovThapperZivny15,GenVCSP15,KozikOchremiak15}. 

In this paper we show that VCSPs for submodular PLH cost functions can be solved in polynomial time (Theorem~\ref{thm:tract} in Section~\ref{sect:tract}). 
To solve this problem, we first describe how to solve the feasibility problem (does there exist a solution satisfying all crisp constraints) 
and then how to find the optimal solution. 
The first step follows from a new, more general polynomial-time tractability result,
namely for \emph{max-closed} PLH constraints (Section~\ref{sect:csp-tract}). 
To then solve the optimisation problem for PLH constraints, we introduce a technique to reduce the task to a problem over a finite domain that can be solved by a fully combinatorial polynomial-time algorithm for submodular set-function optimisation by Iwata and Orlin~\cite{IwataOrlin}. 
Moreover, we show that submodularity defines
a \emph{maximal tractable class}: adding any 
cost function that is not submodular leads to an NP-hard VCSP (Section~\ref{sect:maximal}). 
Section~\ref{sect:conclusion} closes with some problems and challenges.

%\section{Preliminaries}
%\label{sect:prelims}
%Throughout the paper we adopt the following notation: any name with a bar denotes a tuple; we denote by $x_i$ the $i$-th component of a tuple $x$. We denote by %$\Q_{>0}$, 
%$\mathbb N$, $\mathbb Z$, 
%$\Q$, and $\R$, respectively, the set of natural numbers, integers, rational numbers, and real numbers.

\section{Valued Constraint Satisfaction Problems}
\label{sect:vcsps}
A \emph{valued constraint language $\Gamma$ 
(over $D$)} (or simply \emph{language}) consists of 
\begin{itemize}
\item a signature $\tau$ 
consisting of function symbols $f$, each equipped with an arity $\ar(f)$, 
\item a set $D = \dom(\Gamma)$ (the \emph{domain}), 
\item for each $f \in \tau$ a 
\emph{cost function}, i.e.,  a function $f^{\Gamma} \colon D^{\ar(f)} \to {\mathbb Q} \cup \{+\infty\}$. 
\end{itemize}
Here, $+\infty$ is an extra element with the expected properties that for all $c \in {\mathbb Q} \cup \{+\infty\}$
\begin{align*}
(+\infty) + c & = c + (+\infty) = +\infty \\
\text{  and } c & < +\infty \text{ iff } c \in {\mathbb Q}.
\end{align*} 
Let $\Gamma$ be a valued constraint language with a finite signature $\tau$. 
%finitely many function symbols.\footnote{We do not write \emph{finite valued constraint language} to not create confusion with the term \emph{finite-valued constraint language}, which 
%has been used for something different in the literature.} 
The \emph{valued constraint satisfaction problem  for $\Gamma$}, denoted by $\vcsp(\g)$, is the following computational problem. 

\begin{definition}\label{vcspdef}
An \emph{instance} $I$ of $\vcsp(\g)$ 
consists of 
\begin{itemize}
\item a finite set of variables $V_I$, 
\item an expression $\phi_I$ of the form
\[\sum_{i=1}^{m} f_i(x^i_1,\ldots, x^i_{\ar(f_i)})\]
where $f_1,\dots,f_m \in \tau$ and all the $x^i_j$ are variables from $V_I$, and
\item a value $u_I \in {\mathbb Q} \cup \{+\infty\}$. 
\end{itemize} 
The task is to decide whether there exists a map $\alpha \colon V_I \to \dom(\Gamma)$ whose \emph{cost}, defined as
\[\sum_{i=1}^{m} f^\Gamma_i(\alpha(x^i_1),\ldots, \alpha(x^i_{\ar(f_i)}))\]
is finite, and if so, whether there is one whose cost is smaller or equal to $u_I$.
\end{definition}
Note that since the signature $\tau$ of $\Gamma$
is finite, it is inessential for the computational complexity
of $\vcsp(\g)$ how the function symbols in $\phi_I$ are represented. 
The function described by the expression $\phi_I$ 
is also called the \emph{objective function}. 
When $u_I = +\infty$ then this problem
is called the \emph{feasibility} problem, which can
also be modelled as a (classical) constraint satisfaction problem. The choice of defining the VCSP as a decision problem and not as an optimisation problem as it is common for VCSPs over finite domain is motivated by two major issues that do not occur in the finite domain case: first, in the infinite domain setting one  needs to catch the difference between a proper minimum and an infimum value that the cost of the assignment can be  arbitrarily close  to but never reach; on the other hand, our definition allows us to model the case in which the infimum is $-\infty$, i.e., when there are assignments for the variables of arbitrarily small cost.

VCSPs have been studied intensively
when $D = \dom(\Gamma)$ is finite,
and as mentioned in the introduction, in this case a complete classification of the computational complexity of 
$\vcsp(\Gamma)$ has been obtained recently. 
However, many well-known optimisation problems can only be formulated when we allow
infinite domains $D$.

\begin{example}\label{expl:LP}
Let $\Gamma$ be the valued constraint language
with domain $D := {\mathbb Q}$ and
the signature $\tau = \{R_+,R_1,\leq,\text{id}\}$ where
\begin{itemize}
\item $R_+$ is ternary, and
\[R^\Gamma_+(x,y,z) = \begin{cases} 0 
& \text{ if } x+y=z \\
+\infty & \text{ otherwise; } \end{cases}\]
\item $R_1$ is unary and 
\[R_1^{\Gamma}(x) := \begin{cases} 0 
& \text{ if } x=1 \\
+\infty & \text{ otherwise; } \end{cases}\]
\item $\leq$ is binary and 
\[\leq^{\Gamma}(x,y)  := \begin{cases} 0 
& \text{ if } x \leq y \\
+\infty & \text{ otherwise; } \end{cases}\]
\item $\text{id}$ is unary and 
\[\text{id}^{\Gamma}(x)  := x.\]
\end{itemize}
Then instances of $\vcsp(\Gamma)$ are linear programming problems (optimise a linear objective function subject to linear inequality constraints).  
\end{example}

We give another example to illustrate the 
flexibility of the VCSP framework for formulating optimisation problems; the valued constraint language in this example contains non-convex cost functions, but, as we will see later, can be solved in polynomial time. 
%; this example will be used
%as an illustration later again.

\begin{example}\label{expl:intro2}
Let $\Gamma$ be the valued constraint language
with signature $\tau = \{g_1,g_2,g_3\}$ and
the cost functions 
\begin{itemize}
\item $g^\Gamma_1 \colon {\mathbb Q} \to {\mathbb Q}$ defined by $g_1(x) = -x$,
\item $g^\Gamma_2 \colon {\mathbb Q}^2 \to \mathbb Q$ defined by $g_2(x,y) := \min(x,-y)$,
and 
\item $g^\Gamma_3 \colon {\mathbb Q}^3 \to \mathbb Q$ defined by $g_3(x,y,z) := \max(x,y,z)$. 
\end{itemize}
Two examples of instances of VCSP$(\Gamma)$ are 
\begin{align} 
& g_1(x) + g_1(y) + g_1(z) + g_2(x,y) \nonumber \\
+ &  
g_3(x,y,z) + g_3(x,x,x) + g_3(x,x,x)  \label{eq:instance1} \\
\text{ and } \quad & 
g_1(x) + g_1(y) + g_1(z) \nonumber \\
+ & g_3(x,y,z) + g_3(x,x,y) + g_3(y,z,z)
%& g_1(x) + g_1(z) + g_2(x,y) + g_3(x,y,z) 
%\text{ and } \quad & g_3(x,y,u) + g_3(y,u,v) + g_1(x) + g_1(y) + g_1(u) + g_1(v) \, .
\label{eq:instance2}
\end{align}
We can make the cost function described by the expression in $(\ref{eq:instance1})$ arbitrarily small by fixing $x$ to $0$ and choosing $y$ and $z$ sufficiently large. 
On the other hand, the minimum for the cost function in $(\ref{eq:instance2})$ is $0$, obtained by setting $x,y,z$ to $0$. 
Note that $g_1$ and $g_3$ are convex functions, but $g_2$ is not.
% convex, and not the sum of unary cost functions, so the VCSP is not subsumed by one of the two known polynomial-time solvable VCSPs over the rationals that we mentioned in the introduction. 
\end{example}

\section{Cost functions over the rationals}
\label{sect:cost}
The class of all valued constraint languages over arbitrary infinite domains is too large to allow for complete complexity classifications, so we have to restrict our focus to subclasses. In this section we describe natural 
and large classes of cost functions over the domain $D = {\mathbb Q}$, the rational numbers. These classes
are most naturally introduced using first-order definability, so we briefly fix the necessary logic concepts. 

\subsection{Logic Preliminaries}
We fix some standard logic terminology; see,.e.g.,~\cite{Hodges}. 
A \emph{signature} is a set $\tau$ of function and relation symbols. Each function symbol $f$ and each relation symbol $R$ is equipped with an arity $\ar(f)$, $\ar(R) \in {\mathbb N}$.  
A \emph{$\tau$-structure} $\mathfrak A$ consists of \begin{itemize}
\item a set $A = \dom(\mathfrak A)$, called the \emph{domain} of $\mathfrak A$, whose elements are called the \emph{elements} of the $\tau$-structure; 
\item for each relation symbol $R \in \tau$ 
a relation $R^{\mathfrak A} \subseteq A^{\ar(R)}$; 
\item for each function symbol $f \in \tau$ a 
function $f^{{\mathfrak A}} \colon A^{\ar(f)} \to A$. 
%\emph{partial function}, that is, for some $\dom(f^{\mathfrak A}) \subseteq A^{\ar(f)}$ a function $f^{{\mathfrak A}} \colon \dom(f^{\mathfrak A}) \to A$. 
\end{itemize}
Function symbols of arity $0$ are allowed and
are called constant symbols. 
We give two examples of structures that play an important role in this article. 

\begin{example}\label{expl:S}
Let $\mathfrak S$ be the structure
with domain ${\mathbb Q}$ 
and the signature $\{+,1,\leq\}$
where 
\begin{itemize}
\item $+$ is a binary function symbol that 
denotes the usual addition over ${\mathbb Q}$, \item $1$ is a constant symbol that denotes $1 \in {\mathbb Q}$, and 
\item $\leq$ is a binary relation symbol that denotes the usual linear order of the rationals.
\end{itemize}  
\end{example}

\begin{example}\label{expl:L}
Let $\mathfrak L$ be the structure with
the (countably infinite) signature 
$\tau_0 := \{<,1\}\cup \{c\cdot\}_{c \in \Q}$ 
where 
\begin{itemize}
\item $<$ is a relation symbol of arity $2$
and $<^{\mathfrak L}$ is the strict linear order of $\Q$,
\item $1$ is a constant symbol and $1^{\mathfrak L} := 1 \in \Q$, and 
\item $c \cdot$ is a unary function symbol for every $c \in {\mathbb Q}$ such that $(c \cdot)^{\mathfrak L}$ is the function $x \mapsto cx$ (multiplication by $c$).
\end{itemize}
\end{example}

\subsection{Quantifier Elimination} 
Let $\tau$ be a signature. 
We adopt the usual definition of first-order logic. 
A formula is \emph{atomic} if it does not contain logical symbols (connectives or quantifiers). By convention, we have two special atomic formulas,
$\top$ and~$\bot$, to denote truth and falsity.

We say that a $\tau$-structure $\mathfrak A$ has \emph{quantifier elimination} if every first-order $\tau$-formula is equivalent to a quantifier-free $\tau$-formula over $\mathfrak A$.

\begin{theorem}[\cite{FerranteRackoff}]
The structure $\mathfrak S$ from Example~\ref{expl:S} has quantifier elimination. 
\end{theorem}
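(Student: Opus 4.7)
My plan is to prove quantifier elimination via the standard inductive reduction: it suffices to show that for every quantifier-free $\{+,1,\leq\}$-formula $\phi(x,\bar y)$, the formula $\exists x\, \phi(x,\bar y)$ is equivalent over $\mathfrak S$ to a quantifier-free formula. A routine induction on formula structure then propagates quantifier-freeness through the logical connectives and handles quantifier alternation via $\forall x\,\psi \equiv \lnot\exists x\,\lnot\psi$.

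To eliminate a single existential, I would first put $\phi$ into disjunctive normal form and use the fact that $\exists$ distributes over $\vee$, reducing the task to the case in which $\phi$ is a conjunction of literals. Each atomic formula over $\{+,1,\leq\}$ has the shape $t_1 = t_2$ or $t_1 \leq t_2$, where every term $t$ is a sum of variables and copies of $1$, that is, a non-negative integer linear combination. Rearranging terms across $\leq$ or $=$, each literal becomes equivalent in $\mathfrak S$ to one of $ax = s(\bar y)$, $ax \neq s(\bar y)$, $ax \leq s(\bar y)$, $ax < s(\bar y)$, $s(\bar y) \leq ax$, or $s(\bar y) < ax$, with $a$ a non-negative integer and $s$ a term not involving $x$. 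Literals with $a=0$ do not constrain $x$ and can be pulled outside $\exists x$. Letting $N$ be the least common multiple of the nonzero integer coefficients of $x$ that occur, I would scale every remaining literal by the appropriate positive integer so that the coefficient of $x$ becomes exactly $N$ in all of them. Since $\mathbb Q$ is divisible, the map $x \mapsto Nx$ is a bijection of $\mathbb Q$, and so the substitution $y := Nx$ reduces the task to eliminating $\exists y$ from a conjunction of literals in which $y$ appears with coefficient $1$.

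It then remains to eliminate $\exists y$ from a conjunction of literals of the forms $y = t$, $y \neq t$, $y \leq t$, $y < t$, $t \leq y$, $t < y$, with each $t$ a term in $\bar y$. If some literal is an equation $y = t_0$, substituting $t_0$ for $y$ in every other literal produces a quantifier-free formula equivalent to the existential. Otherwise, I would partition the remaining literals into lower bounds on $y$, upper bounds on $y$, and disequalities; the existence of a witness $y$ is equivalent to the quantifier-free assertion that each lower bound lies below each upper bound with the appropriate strictness, and, in the degenerate case where some non-strict lower and some non-strict upper bound coincide, that the forced common value differs from every disequality constant. Density and unboundedness of $(\mathbb Q;\leq)$ guarantee that whenever the interval of admissible values of $y$ is non-degenerate it contains infinitely many rationals, so the finitely many disequalities can always be avoided.

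The hard part is not conceptual but organisational: writing out the resulting quantifier-free formula as a disjunction of conjunctions that enumerates all combinations of strict and non-strict bounds together with the degenerate forced-value case. The mathematical content reduces to three elementary properties of $\mathfrak S$, namely that terms are integer linear combinations of the variables and $1$, that $\mathbb Q$ is divisible (which lets us absorb the common coefficient of $x$), and that $(\mathbb Q;\leq)$ is densely ordered without endpoints (which makes the existence of an appropriate $y$ expressible without quantifiers).
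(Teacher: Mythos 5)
The paper does not supply a proof of this theorem at all: it is cited from Ferrante--Rackoff and used as a black box, while the only quantifier-elimination argument actually written out in the paper is the one for $\mathfrak L$ (Lemma~\ref{qel} and the proof of Theorem~\ref{thm:qe}). So there is nothing to compare against word for word. Your sketch is the standard Fourier--Motzkin elimination for ordered divisible abelian groups, and it is the right argument; the one mathematical fact you do not mention explicitly but are implicitly using is that the order is dense and unbounded so that a nonempty open interval is infinite, and you do invoke that. Two technical points need repair, though. First, the normal form you claim for literals, $ax \; \sigma \; s(\bar y)$ with $s$ a term not involving $x$, is not actually attainable: the signature $\{+,1,\leq\}$ has no subtraction, so a literal such as $3x+y\leq z$ cannot be rewritten with all occurrences of the parameters isolated on one side. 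The correct normal form is $ax+s_1(\bar y)\;\sigma\;s_2(\bar y)$, and the Fourier--Motzkin combination of a lower bound $bx+s_3\geq s_4$ with an upper bound $ax+s_1\leq s_2$ should be written directly as $a s_4 + b s_1 \leq a s_3 + b s_2$, which is a legitimate atomic formula; likewise, eliminating $y$ against an ``equation'' $y+s_1=s_2$ means replacing each remaining literal $y+s_3\;\sigma\;s_4$ by $s_2+s_3\;\sigma\;s_1+s_4$, not substituting a non-term $s_2-s_1$ for $y$. Second, your bookkeeping of disequalities via a case analysis on the degenerate pinched interval is correct in principle but unnecessarily painful; it is cleaner (and this is exactly what the paper does in its proof of the analogous Lemma~\ref{qel} for $\mathfrak L$) to replace each $\neg(t_1=t_2)$ by $t_1<t_2\vee t_2<t_1$ and each $\neg(t_1\leq t_2)$ by $t_2<t_1$ before passing to disjunctive normal form, so that each disjunct contains only $=$ and $<$ atoms and the final existence criterion is simply the conjunction of all cross comparisons between lower and upper bounds. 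With those two adjustments your proof goes through.
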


\begin{theorem}\label{thm:qe}
The structure $\mathfrak L$ from Example~\ref{expl:L} has quantifier elimination. 
\end{theorem}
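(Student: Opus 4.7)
The plan is to adapt the classical quantifier elimination procedure for dense linear orders without endpoints, accommodating the unary scalar multiplications and the distinguished constant $1$ of the signature $\tau_0$. The key preliminary observation is that, since $\tau_0$ contains no addition, every $\tau_0$-term collapses to one of two shapes: a rational constant (built from $1$ by iterated application of $c\cdot$, since $c\cdot 1$ realizes $c$ for every $c \in \mathbb{Q}$) or an expression of the form $d \cdot x$ for some variable $x$ and some $d \in \mathbb{Q}$, obtained by composing the nested scalar multiplications into a single coefficient. Consequently, every atomic $\tau_0$-formula can be written as a comparison under $<$ or $=$ between two terms of these restricted shapes.

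By the usual inductive reduction it suffices to eliminate a single existential quantifier from formulas of the form $\exists x\, \psi(x, \bar y)$, where $\psi$ is a conjunction of literals: universal quantifiers are rewritten as $\neg\exists\neg$, negations of atomic formulas are replaced by disjunctions using $\neg(a<b) \equiv (a=b) \vee (b<a)$ and $\neg(a=b) \equiv (a<b) \vee (b<a)$, and disjunctions are pushed outside the quantifier via disjunctive normal form. For each literal in $\psi$ that contains $x$ with coefficient $d \neq 0$, I would apply the scalar $d^{-1}\cdot$ on both sides (and flip the inequality when $d<0$) to normalize it to one of the shapes $x = t$, $x < t$, or $t < x$, where $t$ is a term not containing $x$; literals with $d = 0$ are already $x$-free and can be pulled out of the quantifier.

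If some conjunct has the form $x = t$, the quantifier is removed by substituting $t$ for $x$. Otherwise, writing the remaining $x$-involving conjuncts as $\bigwedge_i x < s_i \wedge \bigwedge_j t_j < x \wedge \bigwedge_l x \neq u_l$, the formula $\exists x\, \psi$ is equivalent to $\bigwedge_{i,j} t_j < s_i$ together with the $x$-free literals, because the order on $\mathbb{Q}$ is dense and has no endpoints and every nonempty $\mathbb{Q}$-interval (bounded or unbounded) is infinite, so finitely many disequality constraints $x \neq u_l$ can always be satisfied inside it. The step requiring the most bookkeeping is the sign analysis when dividing by the coefficient $d$; no genuinely new idea beyond the standard DLO quantifier elimination is needed, and the two distinctive features of $\mathfrak{L}$ enter only to realize every rational as a term (through $1$) and to normalize the coefficient of $x$ in each literal (through the unary scalar functions).
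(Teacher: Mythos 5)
Your proof is correct and follows essentially the same route as the paper's: reduce to a single existential over a conjunction of literals, normalize each literal containing $x$ by rescaling (possible because $\tau_0$-terms without addition collapse to $d\cdot x$ or a constant), substitute if an equality $x=t$ appears, and otherwise apply a Fourier--Motzkin/density argument yielding $\bigwedge_{i,j} t_j < s_i$. The only cosmetic difference is that you retain disequalities $x\neq u_l$ and argue around them by density, whereas the paper first eliminates negations entirely by rewriting $\neg(a=b)$ as a disjunction before going to DNF; both choices work, and neither changes the substance of the elimination step.
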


Observe that every atomic $\tau_0$-formula 
has at most two variables:
\begin{itemize}
\item if it has no variables,
then it is equivalent to $\top$ or $\perp$,
\item if it has only one variable, say $x$,
then it is equivalent to $c {\cdot} x \, \sigma \, d {\cdot} 1$ or to $d {\cdot} 1 \, \sigma \, c {\cdot} x$ for $\sigma \in \{<,=\}$ and $c,d \in {\mathbb Q}$. 
Moreover, if $c = 0$ then it is equivalent to a formula without variables, and otherwise it is equivalent to $x \, \sigma \, \frac{d}{c}{\cdot} 1$
or to $ \frac{d}{c}{\cdot}1 \, \sigma \, x$ for $\sigma \in \{<,=\}$, which we abbreviate by the more common $x < \frac{d}{c}$, $x = \frac{d}{c}$, and 
$\frac{d}{c}<x$, respectively. 
%$x > $, 
\item if it has two variables, say $x$ and $y$,
then it is equivalent to $c {\cdot} x \, \sigma \, d {\cdot} y$ or $c {\cdot} x \, \sigma \, d {\cdot} y$ for $\sigma \in \{<,=\}$. Moreover, if $c = 0$ or $d = 0$ then the formula is equivalent to a formula with at most one variable, and otherwise it is equivalent to $x \, \sigma \, \frac{d}{c}{\cdot}y$ or to $\frac{d}{c}{\cdot} y \, \sigma \, x$.
%, which we abbreviate by the more common $x < \frac{d}{c} y$, $x = \frac{d}{c} y$, $x > \frac{d}{c} y$, respectively. 
\end{itemize} 

To prove Theorem \ref{thm:qe} it suffices to prove the following lemma.

\begin{lemma}\label{qel}
For every quantifier-free $\tau_0$-formula $\varphi$ there exists a quantifier-free $\tau_0$-formula $\psi$ such that $\exists x. \varphi$ is equivalent to $\psi$ over $\mathfrak L$. 
\end{lemma}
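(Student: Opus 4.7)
My plan is to reduce to a very restricted normal form and then eliminate $x$ using the density and unboundedness of $(\Q,<)$.

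First, I would put $\varphi$ in disjunctive normal form. Since $\exists x$ commutes with disjunction, it suffices to handle $\exists x.\,\bigwedge_i \alpha_i$, where each $\alpha_i$ is either an atomic $\tau_0$-formula or the negation of one. Over $\mathfrak L$ every negated atom is equivalent to a positive disjunction: $\neg(s<t)$ is $t<s \vee t=s$, and $\neg(s=t)$ is $s<t \vee t<s$. Distributing and pulling out the disjunction again, I may assume that we have $\exists x.\,\bigwedge_i \alpha_i$ with each $\alpha_i$ atomic.

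Next, I would use the classification of atomic $\tau_0$-formulas already spelled out in the text preceding the lemma: each $\alpha_i$ either does not mention $x$ (and can be pulled out of $\exists x$), or is equivalent, over $\mathfrak L$, to exactly one of the forms $x=q$, $x<q$, $q<x$, $x=cy$, $x<cy$, $cy<x$, where $q,c\in\Q$ and $y$ is a variable distinct from $x$. If some conjunct has the form $x=q$ or $x=cy$, I substitute this value for $x$ in every other conjunct and drop the quantifier; the resulting formula is quantifier-free. Otherwise, every conjunct mentioning $x$ is a strict inequality and can be written as $L_k < x$ or $x < U_\ell$, where each $L_k, U_\ell$ is a term of the form $q$ or $cy$.

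In this remaining case, I claim $\exists x.\,\bigwedge_k L_k < x \wedge \bigwedge_\ell x < U_\ell$ is equivalent over $\mathfrak L$ to $\bigwedge_{k,\ell} L_k < U_\ell$ (conjoined with any conjuncts not mentioning $x$, which were already set aside). The left-to-right direction is immediate by transitivity. For the converse, if every $L_k$ is strictly below every $U_\ell$, then $(\Q,<)$ being a dense linear order without endpoints supplies a witness: if there are both lower and upper bounds, pick any rational strictly between $\max_k L_k$ and $\min_\ell U_\ell$; if there are only lower bounds, pick something larger than all of them; only upper bounds, symmetric; no bounds at all, pick anything. Note that the resulting conjuncts $L_k < U_\ell$ are atomic $\tau_0$-formulas in the remaining variables, so $\psi$ is indeed quantifier-free.

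The only real obstacle is bookkeeping: making sure the normalisation of atoms in the second step correctly funnels every case (including $c=0$ or $d=0$ degenerations) into one of the six canonical shapes, so that the final density/unboundedness argument applies uniformly. Once that classification is in hand, the elimination is mechanical, and iterating Lemma \ref{qel} over the quantifier prefix of an arbitrary first-order $\tau_0$-formula (after pushing negations in and using $\forall x.\chi \equiv \neg\exists x.\neg\chi$) yields Theorem \ref{thm:qe}.
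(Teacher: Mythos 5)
Your proof is correct and follows essentially the same route as the paper: reduce to a disjunction of conjunctions of positive atomic formulas, substitute away any equality constraint on $x$, and eliminate $x$ from the remaining strict inequalities via $\bigwedge_{k,\ell} L_k < U_\ell$, justified by the density and unboundedness of $(\Q,<)$ (the paper cites this as correctness of Fourier--Motzkin elimination). The only differences are cosmetic: you take DNF before eliminating negated atoms and the paper does the reverse, and you spell out the density/unboundedness witness argument that the paper invokes by name.
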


\begin{proof}
%Let $\varphi$ be a quantifier-free $\tau_0$-formula, 
We define $\psi$ in seven steps. \begin{enumerate}
\item Rewrite $\varphi$, using De Morgan's laws, in such a way that all the negations are applied to atomic formulas. 
\item Replace \begin{itemize} 
\item $\neg (s=t)$ by $s<t\vee t<s$, and 
\item $\neg (s<t)$ by $t<s \vee s=t$,
\end{itemize} where $s$ and $t$ are $\tau_0$-terms.
\item Write $\varphi$ in disjunctive normal form in such a way that each of the clauses is a conjunction of non-negated atomic $\tau_0$-formulas (this can be done by distributivity).
\item Observe that $\exists x \bigvee_i \bigwedge_j \chi_{i,j}$, where the $\chi_{i,j}$ are atomic $\tau_0$-formulas, is equivalent to $\bigvee_i \exists x \bigwedge_j \chi_{i,j}$. Therefore, it is sufficient to prove the lemma for $\varphi=\bigwedge_j \chi_j$ where the $\chi_j$ are  atomic $\tau_0$-formulas. As explained above, we can assume without loss of generality that the $\chi_j$ are of the form $\top$, $\perp$, $x \, \sigma \, c$,  $c \, \sigma \, x$, or $x \, \sigma \, cy$, for $c \in {\mathbb Q}$ and $\sigma \in \{<,=\}$. 
If $\chi_j$ equals $\perp$, then $\varphi$
is equivalent to $\perp$ and there is nothing to be shown. If $\chi_j$ equals $\top$ then it can simply
be removed from $\varphi$. If
$\chi_j$ equals $x = c$ or $x=cy$ then replace every occurrence of $x$ by $c \cdot 1$ or by $c\cdot y$, respectively. 
Then $\varphi$ does not contain the variable $x$ anymore and thus $\exists x. \varphi$ is equivalent to $\varphi$. 
\item We are left with the case that all atomic $\tau_0$-formulas involving $x$ are (strict) inequalities, that is, $\varphi=\bigwedge_i \chi_i \wedge \bigwedge_i \chi'_i \wedge \bigwedge_i \chi''_l$, where 
\begin{itemize}
\item the $\chi_i$ are atomic formulas not containing $x$,
\item the $\chi'_i$ are atomic formulas of the form $x>u_i$,
\item the $\chi''_i$ are atomic formulas of the form $x<v_i$.
\end{itemize}
Then $\exists x. \varphi$ is equivalent to $\bigwedge_i \chi_i \wedge \bigwedge_{i,j}(u_i<v_j)$.
\end{enumerate}
Each step of this procedure preserves the satisfying assignments for $\varphi$ and the resulting formula is in the required form; 
this is obvious for all but the last step, 
and for the last step follows from the correctness of Fourier-Motzkin elimination for systems of linear inequalities. Therefore the procedure is correct.
\end{proof}

\begin{proof}[Proof (of Theorem \ref{thm:qe})]
Let $\varphi$ be a $\tau_0$-formula. We prove that it is equivalent to a quantifier-free $\tau_0$-formula by induction on the number $n$ of quantifiers of $\varphi$. For $n=1$ we have two cases:
\begin{itemize}
\item If $\varphi$ is of the form $\exists x. \varphi'$ (with $\varphi'$ quantifier-free) then, by Lemma \ref{qel}, it is equivalent to a quantifier-free $\tau_0$-formula $\psi$.
\item If $\varphi$ is of the form $\forall x. \varphi'$ (with $\varphi'$ quantifier-free), then it is equivalent to $\neg \exists x. \neg \varphi'$. By Lemma \ref{qel}, $\exists x. \neg \varphi'$ is equivalent to a quantifier-free $\tau_0$-formula $\psi$. Therefore, $\varphi$ is equivalent to the quantifier-free $\tau_0$-formula $\neg \psi$.
\end{itemize}
Now suppose that $\varphi$ is of the form
$Q_1 x_1 Q_2x_2 \cdots Q_nx_n.\varphi'$ 
for $n \geq 2$ and $Q_1,\dots,Q_n \in \{\forall,\exists\}$, and suppose that the statement 
is true for $\tau_0$-formulas with at most $n-1$ quantifiers. In particular, 
$Q_2x_2 \cdots Q_nx_n.\varphi'$ is equivalent to a quantifier -free $\tau_0$-formula $\psi$. Therefore, $\varphi$ is equivalent to $Q_1x_1.\psi$, that is, a $\tau_0$-formula with one quantifier that is equivalent to a quantifier-free $\tau_0$-formula, again by inductive hypothesis.
\end{proof}

\subsection{Piecewise Linear Homogeneous Functions}
\label{sect:piecewise-hom}
A \emph{partial function} of arity $n \in {\mathbb N}$ over a set $A$ is a function
\[f \colon \dom(f) \to A \text{ for some } \dom(f) \subseteq A^{n} \; .\]
Let $\mathfrak A$ be a $\tau$-structure. 
A partial function over $A$ is called \emph{first-order definable over $\mathfrak A$} if 
there exists a first-order $\tau$-formula $\phi(x_0,x_1,\dots,x_n)$ such that for all $a_1,\dots,a_n \in A$ 
\begin{itemize}
\item if $(a_1,\dots,a_n) \in \dom(f)$ then $\mathfrak A \models \phi(a_0,a_1,\dots,a_n)$
if and only if\\ $a_0 = f(a_1,\dots,a_n)$, and
\item if $f(a_1,\dots,a_n) \notin \dom(f)$ then there is no $a_0 \in A$ such that\\ $\mathfrak A \models \phi(a_0,a_1,\dots,a_n)$. 
\end{itemize}

In the following, 
we consider \emph{cost functions
over ${\mathbb Q}$}, which will be  
functions from ${\mathbb Q}^n \to \Q \cup \{+\infty\}$.
%, where $+\infty$ is an extra element with the expected properties that for all $c \in {\mathbb R} \cup \{+\infty\}$
%\begin{align*}
%(+\infty) + c & = c + (+\infty) = +\infty \\
%\text{  and } & c < +\infty \text{ iff } c \in {\mathbb R}.
%\end{align*} 
It is sometimes convenient to view a cost function 
as a partial function over ${\mathbb Q}$. 
 If $t \in \Q^{\ar(f)} \setminus \dom(f)$ we interpret this as $f(t) = +\infty$.

\begin{definition}
A cost function $f \colon {\mathbb Q}^n \rightarrow \Q \cup \{+\infty\}$ (viewed as a partial function)
is called 
\begin{itemize}
\item \emph{piecewise linear (PL)} if it is 
first-order definable over $\mathfrak S$, piecewise linear functions are sometimes called \emph{semilinear} functions;
\item \emph{piecewise linear homogeneous (PLH)} if it is 
first-order definable over $\mathfrak L$ (viewed as a partial function). 
\end{itemize}
A valued constraint language $\Gamma$ is called \emph{piecewise linear} (\emph{piecewise linear homogeneous}) if every cost function in $\Gamma$ is PL (or PLH, respectively). 
\end{definition}

Every piecewise linear homogeneous
cost function is also piecewise linear, since
all functions of the structure $\mathfrak L$ are clearly first-order definable in $\mathfrak S$. 
The cost functions in the valued constraint language from Example~\ref{expl:intro2} are PLH.
The cost functions in the valued constraint language from Example~\ref{expl:LP} are PL, but not PLH.

We would like to point out that already the class of 
PLH cost functions is very large. In particular, one can view it as a generalisation of the class of all sets of cost functions over a finite domain $D$. Indeed,
every VCSP for a valued constraint language over a finite domain is also a VCSP for a language that is PLH. To see this, suppose that $f \colon D^d \to \Q \cup \{+\infty\}$ is such a cost function,
identifying $D$ with a subset of $\Q$ in an arbitrary way. Then the function $f' \colon \Q^d \to \Q \cup \{+\infty\}$
defined by $f'(x_1,\dots,x_n) := f(x_1,\dots,x_n)$
if $x_1,\dots,x_n \in D$, and
$f'(x_1,\dots,x_n) = +\infty$ otherwise, 
is PLH.

\subsection{Submodularity}
\label{sect:submodular}
%We introduce now the class of submodular functions. 
%Submodularity has a key role in mathematical models of several scientific fields such as, for example, economics, game theory, machine learning, and computer vision.   

Let $D$ be a set. When $x^1,\dots,x^k \in D^n$ and $g \colon D^k \to D$ is a function,
then $g(x^1,\dots,x^k)$ denotes
the $n$-tuple obtained by applying $g$ \emph{componentwise}, i.e., 
\[g(x^1,\dots,x^k) := (g(x^1_1,\dots,x^k_1),\dots,g(x^1_n,\dots,x^k_n)) .\]

\begin{definition}
Let $D$ be a totally ordered set and let $G$ be a totally ordered Abelian group. 
A partial function $f \colon D^n \to G$ is called \emph{submodular} if for all 
$x$, $y \in D^n$ 
\[f(\max(x,y))+f(\min(x,y))\leq f(x)+f(y).\]
\end{definition}
Note that in particular if $ x, y \in \dom(f)$, then $\min( x, y) \in \dom(f)$ and $\max( x, y) \in \dom(f)$. Hence, the cost function $R_+$ from Example~\ref{expl:LP} is not submodular. 
On the other hand, all cost functions in Example~\ref{expl:intro2} are submodular.

\section{Tractability of Max-Closed PLH Constraints}
\label{sect:csp-tract}

The question whether an instance of $\vcsp(\Gamma)$ is \emph{feasible},
namely has a solution, can be viewed as a (classical) constraint satisfaction
problem. Formally, the constraint 
satisfaction problem for a structure $\mathfrak A$ with a finite
relational signature $\tau$ is the following computational problem,
denoted by $\csp(\mathfrak A)$: 
\begin{itemize}
\item the input is a finite conjunction $\psi$ of atomic $\tau$-formulas,
and 
\item the question is whether 
$\psi$ is satisfiable in $\mathfrak A$. 
\end{itemize}
We can associate to $\Gamma$ the following
relational structure $\feas(\Gamma)$: for every cost function $f$ of arity
$n$ from $\Gamma$ the 
signature of $\feas(\Gamma)$ contains a 
relation symbol $R_f$ of arity $n$ such that 
$R_f^{\feas(\Gamma)} = \dom(f)$. 

Every polynomial-time algorithm for
$\vcsp(\g)$, in particular, has to solve $\csp(\feas(\g))$.
In fact, an instance $\phi$ of $\vcsp(\g)$ can be translated into an
instance $\psi$ of $\csp(\feas(\g))$ by replacing 
subexpressions of the form 
$f(x_1,\dots,x_n)$ in $\phi$ by 
$R_f(x_1,\dots,x_n)$
and by replacing $+$ by $\wedge$. 
It is easy to see that $\phi$ is a feasible instance of $\vcsp(\Gamma)$ if
and only if $\psi$ is satisfiable in $\feas(\g)$.

\begin{definition}
Let $\mathfrak A$ be a structure with relational signature~$\tau$
and domain~$A$.
Then a function $g \colon A^k \to A$ is called
a \emph{polymorphism} of $\mathfrak A$
if for all $R \in \tau$ we have that $R^{\mathfrak A}$ is {\it preserved} by~$g$, namely
$g(x^1,\ldots,  x^k) \in R^{\mathfrak A}$
for all $ x^1,\ldots,  x^k \in R ^{\mathfrak A}$ (where
$g$ is applied component-wise).
\end{definition}

If we consider $\g$ to be a finite submodular PLH valued
constraint language, then~$\feas(\g)$ is
a relational structure all of whose
relations are
\begin{itemize}
\item first-order definable over ${\mathfrak L}$, and
\item preserved by the polymorphisms $\max$ and $\min$.
\end{itemize}
We observed that for the polynomial time tractability of~$\vcsp(\g)$ we
need, in particular, that $\csp(\feas(\g))$ be tractable.
In this section we prove a more general result:
\begin{theorem}\label{thm-csp}
Let $\mathfrak A$ be a structure having domain~$\Q$ and finite
relational signature~$\tau$. Assume that for all $R\in\tau$, the
interpretation $R^{\mathfrak A}$ is PLH and preserved by~$\max$.
Then $\csp(\mathfrak A)$ is polynomial-time solvable.
\end{theorem}
This result is incomparable to known results
about max-closed semilinear relations~\cite{BodirskyMaminoTropical}.
In particular, there, the weaker bound~$\text{NP}\cap\text{co-NP}$ has been
shown for a larger class, and polynomial tractability only for a smaller
class (which does not
contain many max-closed PLH relations, for instance $x \geq \max(y,z)$).

%Let $\mathfrak A$ be a structure with finite relational signature $\tau$
%all of whose relations are PLH and preserved by $\max$. 
%In this section we present a polynomial-time algorithm to decide whether
%an instance $\phi$ of $\csp({\mathfrak A})$ is feasible, i.e.,
%if $V$ are the variables of $\phi$ we would like
%to decide whether there exists a map $\alpha \colon V \to \Q$ whose cost
%it not $+\infty$. 
%As explained in Section~\ref{sect:vcsps}, 
%this is the same than proving that $\csp(\feas(\Gamma))$ is in P. 
We use a technique introduced in~\cite{BodMacpheTha}
which relies on the following concept.

\begin{definition}
Let $\mathfrak A$ be a structure with a finite relational signature
$\tau$. A \emph{sampling algorithm} for $\mathfrak A$ takes as input a
positive integer $d$ and computes a finite $\tau$-structure $\mathfrak B$
such that every finite conjunction of atomic $\tau$-formulas having at
most $d$ distinct free variables is
satisfiable in $\mathfrak A$ if, and only if, it is satisfiable in
$\mathfrak B$.
A sampling algorithm is called \emph{efficient} if its running time is
bounded by a polynomial in $d$.
\end{definition}

The definition above is a slight re-formulation of Definition~2.2
in~\cite{BodMacpheTha},
and it is easily seen to give the same results using the same proofs.
We decided to bound the number of variables
instead of the size of the conjunction of atomic~$\tau$-formulas because
this is more natural in our context. These two quantities are polynomially
related by the assumption that the signature~$\tau$ be finite.

\begin{definition}
A $k$-ary function $g \colon D^k \to D$ is called \emph{totally
symmetric} if
$g(x_1,\ldots,x_k)=g(y_1,\ldots,y_k)$
for all $x_1,\ldots,x_k,y_1,\ldots,y_k \in D$
such that $\{x_1\ldots,x_k\}=\{y_1,\ldots,y_k\}$.
\end{definition}

\begin{theorem}[Bodirsky-Macpherson-Thapper, \cite{BodMacpheTha}, Theorem 2.5]
\label{macphe}
Let $\mathfrak A$ be a structure over a finite relational
signature with totally symmetric polymorphisms of all arities. If there
exists an efficient sampling algorithm for $\mathfrak A$ then
$\csp({\mathfrak A})$ is in P.
\end{theorem}

\def\mfa{{\mathfrak A}}
In this section, we study $\csp(\mfa)$, where $\mfa$ is a $\tau$-structure
satisfying the
hypothesis of Theorem~\ref{thm-csp}. We give a formal definition of
the {\it numerical data} in~$\mfa$, we will need it later on. By quantifier
elimination (Theorem~\ref{thm:qe}), we can write each of the finitely many
relations $R^\mfa$ for~$R\in\tau$ as a quantifier-free $\tau_0$-formula~$\phi_R$. As
in the proof of Theorem~\ref{thm:qe}, we can assume that all formulas~$\phi_R$
are positive (namely contain no negations).
From now on, we will fix one such representation. Let
\def\atm#1{{\text{At}(#1)}}$\atm{\phi_R}$ denote the set of atomic subformulas of~$\phi_R$. Each atomic
$\tau_0$-formula is of the form $t_1 {<\atop =} t_2$, where $t_1$
and~$t_2$ are terms. We call the atomic formula non-trivial if
it is not equivalent to~$\bot$ or~$\top$,
from now on we make the following assumptions on the atomic
formulas (cf.\ again the proof of Theorem~\ref{thm:qe})
\begin{itemize}
\item that atomic formulas except~$\top,\bot$ are non-trivial
\item that the functions $k\cdot$ are never composed,
because $k\cdot h\cdot x$ can be replaced by $(kh)\cdot x$
\item that, in any atomic formula $k\cdot x_i {<\atop =} h\cdot x_j$, the
constants~$k$ and~$h$ are not both negative.
\end{itemize}

Given a set of non-trivial atomic formulas~$\Phi$, we define
\[ H(\Phi) = \left\{ \frac{c_1}{c_2} \;\middle|\; t_1=c_1\cdot x_i, \; t_2
= c_2\cdot x_j,\;  \text{for some $t_1 {<\atop =} t_2$ in $\Phi$} \right\}
\]
\begin{align*}
K(\Phi) = &\left\{ \frac{c_2}{c_1} \;\middle|\; t_1=c_1\cdot x_i, \; t_2 =
c_2\cdot 1,\;  \text{for some $t_1 {<\atop =} t_2$ in $\Phi$} \right\} \\
\cup
&\left\{ \frac{c_1}{c_2} \;\middle|\; t_1=c_1\cdot 1, \; t_2 = c_2\cdot
x_j,\;  \text{for some $t_1 {<\atop =} t_2$ in $\Phi$} \right\}
\end{align*}

We describe now the numeric domain~$\Q^\star$ in which our algorithm operates.

\def\eps{{\boldsymbol\epsilon}}
\begin{definition}
We call~$\Q^\star$ the ordered $\Q$-vector space
\[ \Q^\star = \{ x + y{\boldsymbol\epsilon} \;\mid\; x,y\in\Q\} \]
where $\boldsymbol\epsilon$ is merely a formal device, namely
$x+y{\boldsymbol\epsilon}$ represents the pair~$(x,y)$.
We define addition and multiplication by a scalar component-wise
\begin{align*}
(x_1 + y_1\eps)\,+\,(x_2+y_2\eps) &= (x_1+x_2) + (y_1+y_2)\eps \\
c\cdot(x + y\eps) &= (cx) + (cy) \eps \textbf{.}
\end{align*}
The order is induced by~$\Q$ extended with $0 < \boldsymbol\epsilon \ll 1$,
namely the lexicographical order of the components $x$ and~$y$
\[
(x_1 + y_1\eps)\,<\,(x_2+y_2\eps)\quad\text{iff}\quad
\begin{cases}
x_1 < x_2 \quad \text{or} \\
x_1 = x_2 \;\wedge\; y_1 < y_2\text{.}
\end{cases}
\]
\end{definition}

Any $\tau_0$-formula has an obvious interpretation in any
ordered $\Q$-vector space~$Q$ extending~$\Q$, and, in particular,
in~$\Q^\star$.

\begin{proposition}\label{equiv}
Let $\phi(x_1\dotsc x_d)$ and $\psi(x_1\dotsc x_d)$ be $\tau_0$-formulas.
Then $\phi$ and $\psi$ are equivalent in~$\Q$ if, and only if, they are
equivalent in any ordered $\Q$-vector space~$Q$ extending~$\Q$ (for instance $Q=\Q^\star$).
\end{proposition}
\begin{proof}
It follows from
\cite[Chapter~1, Remark~7.9]{VanDenDries} that the first-order theory of ordered $\Q$-vector
spaces in the signature~$\tau_0\cup\{+,-\}$ is complete. As a consequence
the formula $\forall x_1\dotsc x_d \phi(x_1\dotsc x_d) \leftrightarrow
\psi(x_1\dotsc x_d)$ holds in $\Q$ if and only if it does in~$Q$.
\end{proof}

The proposition gives us a natural extension $\mfa^\star$ of~$\mfa$ to the
domain~$\Q^\star$. Namely the $\tau$-structure obtained
interpreting each relation symbol $R\in\tau$ by the
relation~$R^{\mfa^\star}$ defined on~$\Q^\star$ by the same (quantifier-free)
$\tau_0$-formula~$\phi_R$ that defines $R^\mfa$ over~$\Q$ (by the
proposition, the choice of equivalent $\tau_0$-formulas is immaterial).
Similarly, we will see that, as long as satisfiability is concerned, there
is no difference between $\mfa$ and~$\mfa^\star$.

\begin{corollary}\label{transfer}
Let $\phi$ be an instance of $\csp(\mfa)$, and let~$\phi^\star$ be the
corresponding instance of~$\csp(\mfa^\star)$. Then $\phi$ is satisfiable if and only if~$\phi^\star$ is.
\end{corollary}
\begin{proof}
From Proposition~\ref{equiv} observing that $\phi$ (resp.~$\phi^\star$) is unsatisfiable if and only if it is equivalent to~$\bot$.
\end{proof}

As a consequence, we can work in the extended
structure~$\mfa^\star$. Our goal is to prove the following theorem.

\begin{theorem}\label{thm:sampling}
There is an efficient sampling algorithm for $\mfa^\star$.
\end{theorem}

Assuming, for a moment, Theorem~\ref{thm:sampling}, it is easy to prove
Theorem~\ref{thm-csp}.

\begin{proof}[Proof of Theorem~\ref{thm-csp}]
By Proposition 4.7, for all $k \geq 1$ the function
\[(x_1,\ldots,x_k)\mapsto
\max(x_1,\ldots,x_k)\]
is a $k$-ary totally symmetric polymorphism of
$\csp(\mfa^\star)$. Therefore, $\csp(\mfa^\star)$ is in P by
Theorem~\ref{thm:sampling} and Theorem~\ref{macphe}. Finally, by
Corollary~\ref{transfer}, $\csp(\mfa^\star)$ and~$\csp(\mfa)$ are equivalent.
\end{proof}

Let $\phi$ be an atomic $\tau_0$-formula. We write $\bar \phi$ for the formula $t_1 \leq t_2$ if $\phi$ is of the form $t_1 < t_2$, and for the formula $t_1 = t_2$ if $\phi$ is of the form $t_1 = t_2$.

\begin{lemma}\label{closure}
Let $\Phi$ be a finite set of atomic $\tau_0$-formulas having free variables
in~$\{x_1\dotsc x_d\}$. Assume that $\bar \Phi:=\bigcup_{\phi\in \Phi} \bar\phi$
has a simultaneous solution~$(x_1\dotsc x_d)\in\Q^{>0}$ in positive
numbers. Then $\bar \Phi$ has a solution taking values in the
set $C_{\Phi,d}\subset \Q$ defined as follows
\[
C_{\Phi,d} = 
\left\{ |k|\prod_{i=1}^s |h_i| ^{e_i} \,\middle|\, k\in K(\Phi) ,\;
e_1\dotsc e_s \in {\mathbb Z},\; \sum_{r=1}^s \lvert e_r \rvert < d
\right\}
\]
where $h_1\dotsc h_s$ is an enumeration of the (finitely many) elements
of~$H(\Phi)$.
\end{lemma}
\begin{proof}
Let $\gamma\le\beta$ be maximal such that there are $\Psi_1,\Psi_2,\Psi_3$ with
\begin{align*}
\bar \Phi \mkern 7mu &= \{s_1 = s'_1, \dotsc, s_\alpha = s'_\alpha\} \cup
\{ t_1 \le t'_1, \dotsc, t_\beta \le t'_\beta \}\\
\Psi_1 &= \{s_1 = s'_1, \dotsc, s_\alpha = s'_\alpha\}\\
\Psi_2 &= \{ t_1 = t'_1, \dotsc, t_\gamma=t'_\gamma \}\\
\Psi_3 &= \{t_{\gamma+1} \le t'_{\gamma+1},\dotsc, t_\beta \le t'_\beta \}\text{, }
\end{align*}
where $s_i, s'_i, t_j, t'_j$ are $\tau_0$-terms for all $i$, $j$, and $\Psi_1 \cup \Psi_2 \cup \Psi_3$ is satisfiable in positive numbers.
Clearly the space of positive solutions of~$\Psi_1\cup \Psi_2$ must be contained in
that of~$\Psi_3$. In fact, by construction, they intersect: consider any straight
line segment connecting a solution of $\Psi_1\cup \Psi_2\cup \Psi_3$ and a solution of
$\Psi_1\cup \Psi_2$ not satisfying~$\Psi_3$, on this segment there must be a solution of
$\Psi_1\cup \Psi_2\cup \Psi_3$ lying on the boundary of one of the inequalities of~$\Psi_3$,
contradicting the maximality of~$\gamma$.
By the last observation it suffices to prove that there is a solution
of~$\Psi_1\cup \Psi_2$ taking values in~$C_{\Phi,d}$. Put an edge between two variables $x_i$ and~$x_j$ when
they appear in the same formula of~$\Psi_1\cup \Psi_2$. For each connected component
of the graph thus defined, either
it contains at least one variable~$x_i$ such that there is
a constraint of the form~$h\cdot x_i = k\cdot 1$, 
or all constraints are of the form~$h\cdot x_i = h'\cdot x_j$.
In the first case assign $x_i=\frac{k}{h}$, in the second assign one of the
variables~$x_i$ arbitrarily to~$1$, then, in any case,
since the diameter of the connected component
is~$<d$, all variables in it are forced to take values in~$C_{\Phi,d}$ by
simple propagation of~$x_i$.
\end{proof}

\begin{lemma}\label{pippo}
Let $\Phi$ be a finite set of atomic $\tau_0$-formulas having free variables
in~$\{x_1\dotsc x_d\}$. Assume that the formulas in~$\Phi$ are simultaneously
satisfiable. Then they are simultaneously satisfiable in
$D_{\Phi,d}:=-C^{\star}_{\Phi,d} \cup \{0\} \cup C^{\star}_{\Phi,d}$ where
\[
C^{\star}_{\Phi,d} = \{x + nx\boldsymbol\epsilon \;|\; x \in C_{\Phi,d},\; n\in\mathbb Z,\; -d\le
n\le d\}
\]
$C_{\Phi,d}$ is defined as in Lemma~\ref{closure}, and $-C^{\star}_{\Phi,d}$
denotes the set~$\{-x\;|\;x\in C^{\star}_{\Phi,d}\}$.
\end{lemma}
\begin{proof}
First we fix a solution $x_i=a_i$ for $i=1\dotsc d$ of~$\Phi$. In general,
some of the values~$a_i$ will be positive, some~$0$, and some negative: we
look for a new solution $z_1\dotsc z_d\in D_{\Phi,d}$ such that $z_i$ is
positive, respectively~$0$ or negative, if and only if $a_i$ is.

To this aim we rewrite the formulas in~$\Phi$ replacing each variable $x_i$
with either~$y_i$, or~$0$ (formally $0\cdot 1$), or~$-y_i$ (formally
$-1\cdot y_i$). We call~${\Phi^+}$ the new set of formulas, which, by
construction, is satisfiable in positive numbers $y_i = b_i$. To establish
the lemma, it suffices to find a solution of~${\Phi^+}$ taking values
in~$C^{\star}_{\Phi,d}$.

By Lemma~\ref{closure}, we have an assignment
$y_i = c_i$ of values $c_1\dotsc c_d$ in $C_{{\Phi^+},d} \subseteq C_{\Phi,d}$ that satisfies
simultaneously all formulas~$\bar\phi$ with~$\phi\in {\Phi^+}$.
Let $-d\le n_1\dotsc n_d\le d$ be integers such that for all
$i,j$
\begin{align*}
n_i < n_j \quad &\text{if and only if} \quad {\textstyle \frac{b_i}{c_i} <\frac{ b_j}{c_j}}\\
0 < n_i \quad &\text{if and only if} \quad {\textstyle 1 <\frac{b_i}{c_i}}\\
n_i < 0 \quad &\text{if and only if} \quad {\textstyle \frac{b_i}{c_i} < 1}
\end{align*}
Such numbers exist: simply sort the set
$\{1\}\cup\big\{\frac{b_i }{ c_i}\;|\;i=1\dotsc d\,\big\}$ and consider the positions in
the sorted sequence counting from that of~$1$.
We claim that the assignment
$y_i = c_i + n_ic_i\boldsymbol\epsilon\in\Q^\star$ satisfies all formulas
of~${\Phi^+}$.
To check this, we consider the different cases for atomic formulas
\begin{itemize}
\item $k\cdot y_i < h \cdot y_j$: if $k c_i < h c_j$ this is
obviously satisfied. Otherwise $k c_i = h c_j$, in this case $k$ and~$h$
are positive and the
constraint
\[kc_i+kn_ic_i\boldsymbol\epsilon < hc_j+hn_jc_j\boldsymbol\epsilon\]
is equivalent to~$n_i<n_j$. This, in turn, is equivalent by construction
to $\frac{b_i}{c_i} < \frac{b_j}{c_j}$ which we get by observing that $b_ihc_j = b_ikc_i <
b_jhc_i$.
\item $k\cdot y_i = h \cdot y_j$: obviously $k b_i = h b_j$ and $k c_i = h
c_j$, therefore $\frac{b_i}{c_i} = \frac{b_j}{c_j}$, and, as a consequence, also $n_i=n_j$
from which the statement.
\item $k\cdot 1 < h \cdot y_j$: similarly to the first case,
if $k<h c_j$ this is immediate. Otherwise $k = hc_j$, so $k$ and~$h$ are
positive, the constraint
\[k\cdot 1 < hc_j+hn_jc_j\boldsymbol\epsilon\]
is equivalent to
$0 < n_j$, in other words $1<\frac{b_j}{c_j}$, which follows observing that
$hc_j = k < hb_j$.
\item $k\cdot y_i < h \cdot 1$: as the case above.
\item $k\cdot 1 = h \cdot y_j$: obviously $k\cdot 1 = h b_j = h c_j$,
therefore $\frac{b_j}{c_j} = 1$, so $n_j=0$ and the case follows.
\item $k\cdot y_i = h \cdot 1$: as the case above.
\end{itemize}
\end{proof}

\begin{proof}[Proof of Theorem~\ref{thm:sampling}]
The sampling algorithm produces the finite
substructure~$\mfa^\star_{\atm{\tau},d}$ of~$\mfa^\star$ having
domain~$D_{\atm{\tau},d}$ where
$\atm{\tau}:=\bigcup_{R\in\tau}\atm{\phi_R}$, namely the $\tau$-structure with
domain~$D_{\atm{\tau},d}$ in which each relation symbol~$R\in\tau$
denotes the restriction of~$R^{\mfa^{\star}}$ to~$D_{\atm{\tau},d}$. It is
immediate to observe that this structure has size polynomial in~$d$.

Since $\mfa^\star_{\atm{\tau},d}$ is a substructure of~$\mfa^\star$, it is
clear that if an instance is satisfiable in~$\mfa^\star_{\atm{\tau},d}$,
then it is a fortiori satisfiable in~$\mfa^\star$.

The vice versa follows from Lemma~\ref{pippo}. In fact, consider a set
$\Psi$ of atomic $\tau$-formulas having free variables $x_1\dotsc x_d$. Assume
that $\Psi$ is satisfied in~$\mfa^\star$ by one assignment~$x_i=a_i$ for~$i\in \{1\dotsc d\}$. For
each~$R\in \Psi$ let $\Phi_R \subset \atm{\phi_R}$ be the set of atomic subformulas
of~$\phi_R$ which are satisfied by our assignment~$a_i$. Clearly the atomic
$\tau_0$-formulas $\Phi:=\bigcup_{R\in \Psi} \Phi_R$ are simultaneously satisfiable.
Remembering that the formulas~$\phi_R$ have no negations by construction,
it is obvious that any simultaneous solution of~$\Phi$ must also
satisfy~$\Psi$.
By Lemma~\ref{pippo}, $\Phi$ has a solution in the set~$D_{\Phi,d}$ defined
therein. We can observe
that $C_{\Phi,d}\subset C_{\atm{\tau},d}$,
hence $D_{\Phi,d}\subset D_{\atm{\tau},d}$ and the claim follows.
\end{proof}

\section{Tractability of Submodular PLH Valued Constraints}
\label{sect:tract}

Here we extend the method developed in Section~\ref{sect:csp-tract} to the
treatment of VCSPs. To better highlight the parallel with
Section~\ref{sect:csp-tract}, so that the reader already familiar with it
may quickly get an intuition of the arguments here, we will use identical
notations to represent corresponding objects. This choice has the drawback
that some symbols, notably~$\Q^\star$, need to be re-defined (the new
$\Q^\star$, for instance, will contain the old one). In this section, we
will sometimes skip details that can be borrowed unchanged from
Section~\ref{sect:csp-tract}.

Our goal is to prove the following result
\begin{theorem}\label{thm:tract}
Let $\g$ be a PLH valued finite constraint language.
Assume that all cost functions in~$\g$ are submodular.
Then $\vcsp(\g)$ is polynomial-time solvable.
\end{theorem}

Let us begin with the new definition of~$\Q^\star$.

\begin{definition}
We let $\Q^\star$ denote the ring~$\Q((\eps))$ of formal Laurent
power series in the indeterminate~$\eps$. Namely $\Q^\star$ is the set of
formal expressions
\[\sum_{i=-\infty}^{+\infty} a_i\eps^i\]
where $a_i\neq0$ for only finitely many {\it negative} values of~$i$.
Clearly $\Q$ is embedded in~$\Q^\star$.
The ring operations on $\Q^\star$ are defined as usual
\begin{align*}
\sum_{i=-\infty}^{+\infty} a_i\eps^i + \sum_{i=-\infty}^{+\infty} b_i\eps^i &=
\sum_{i=-\infty}^{+\infty} (a_i+b_i) \eps^i \\
\sum_{i=-\infty}^{+\infty} a_i\eps^i \cdot \sum_{i=-\infty}^{+\infty} b_i\eps^i &=
\sum_{i=-\infty}^{+\infty} \left(\sum_{j=-\infty}^{+\infty} a_jb_{i-j}\right) \eps^i
\end{align*}
where the sum in the product definition is always finite by the hypothesis
on $a_i,b_i$ with negative index~$i$. The order is the lexicographical
order induced by~$0<\eps\ll1$, namely
\[
\sum_{i=-\infty}^{+\infty} a_i\eps^i < \sum_{i=-\infty}^{+\infty} b_i\eps^i
\quad\text{iff}\quad \exists i\; a_i < b_i \wedge \forall j<i\; a_j=b_j
\text{.}
\]
It is well known that $\Q^\star$ is an ordered field, namely all non-zero
elements have a multiplicative inverse and the order is
compatible with the field operations. We define the following subsets
of~$\Q^\star$ for $m\le n$
\[\Q^\star_{m,n} := \left\{ \sum_{i=m}^n \boldsymbol\epsilon^i a_i
\;\middle|\; a_i\in\Q \right\} \subset \Q^\star
\]
\end{definition}

\begin{definition}
We define a new structure $\mathfrak L^\star$, which is both an extension
and an expansion of~$\mathfrak L$, having $\Q^\star$ as domain and
$\tau_1 := \tau_0\cup\{k\}_{k\in\Q^\star_{-1,1}}$
as signature,
where the interpretation of
symbols in~$\tau_0$ is formally the same as for~$\mathfrak L$ and
the symbols $k\in\Q^\star_{-1,1}$ denote constants (zero-ary functions).
\end{definition}
Notice that, for technical reasons, we allow only constants
in $\Q^\star_{-1,1}$. During the rest of this section,
$\tau_1$-formulas will be interpreted in the structure~$\mathfrak
L^\star$. We make on $\tau_1$-formulas the same assumptions of
Section~\ref{sect:csp-tract} (that atomic subformulas are non-trivial and
not negated), also $H(\Phi)$ and~$K(\Phi)$ where $\Phi$ is a set of atomic
$\tau_1$-formulas are defined similarly to Section~\ref{sect:csp-tract}.
Observe that the reduct of $\mathfrak L^\star$ obtained by
restricting the language to~$\tau_0$ is {\it elementarily equivalent}
to~$\mathfrak L$, namely it satisfies the same first-order sentences.

The following lemmas~\ref{closure2}, \ref{pippo2}, and~\ref{vcsp-sampling}
are analogues of Lemma~\ref{closure} and Lemma~\ref{pippo}.

\begin{lemma}\label{closure2}
Let $\Phi$ be a finite set of atomic $\tau_1$-formulas having free variables
in~$\{x_1\dotsc x_d\}$.
Call~$\bar\Phi$ the set~$\bigcup_{\phi\in\Phi}\bar\phi$.
Suppose that there is $0<r\in\Q^\star$ such that
all satisfying assignments of~$\bar\Phi$ in the domain~$\Q^\star$
also satisfy $0< x_i\le r$ for all~$i$. Let $u$, $\alpha_1\dotsc\alpha_d$
be elements of~$\Q^\star$. Assume that the formulas in~$\Phi$ are simultaneously
satisfiable by a point~$(x_1\dotsc x_d)\in \Q^\star$ such that
$\sum_i \alpha_ix_i < u$.
Let us define the set
\[
C_{\Phi,d} = 
\left\{|k|\prod_{i=1}^s |h_i| ^{e_i} \,\middle|\, k\in K(\Phi) ,\;
e_1\dotsc e_s \in {\mathbb Z},\; \sum_{r=1}^s \lvert e_r \rvert < d
\right\} \subseteq \Q^\star_{-1,1}
\]
where $h_1\dotsc h_s$ is an enumeration of the (finitely many) elements
of~$H(\Phi)$.
Then there is
a point in $(x_1'\dotsc x_d')\in C_{\Phi,d}^d\subseteq\Q^\star$
with $\sum_i \alpha_ix_i' < u$ that satisfies simultaneously
all~$\bar\phi$, for~$\phi\in\Phi$.
\end{lemma}
\begin{proof}
As in the proof of Lemma~\ref{closure} (to which we direct the reader for
many details) we take a maximal~$\gamma\le\beta$ such that there are
$\Psi_1,\Psi_2,\Psi_3$ with
\begin{align*}
\bar \Phi \mkern 7mu &= \{s_1 = s'_1, \dotsc, s_\alpha = s'_\alpha\} \cup
\{ t_1 \le t'_1, \dotsc, t_\beta \le t'_\beta \}\\
\Psi_1 &= \{s_1 = s'_1, \dotsc, s_\alpha = s'_\alpha\}\\
\Psi_2 &= \{ t_1 = t'_1, \dotsc, t_\gamma=t'_\gamma \}\\
\Psi_3 &= \{t_{\gamma+1} \le t'_{\gamma+1},\dotsc, t_\beta \le t'_\beta \}
\end{align*}
and $\Psi_1 \cup \Psi_2 \cup \Psi_3$ is satisfiable by an assignment
with~$\sum_i \alpha_ix_i < u$. As in the proof of Lemma~\ref{closure}
the set of solutions of~$\Psi_1\cup\Psi_2$ satisfying~$\sum_i \alpha_ix_i
< u$ is contained in the solutions of~$\Psi_3$. So, here too, it
suffices to show that there is a solution of~$\Psi_1\cup\Psi_2$
with~$\sum_i \alpha_ix_i < u$ taking values in~$C_{\Phi,d}$. The proof
of Lemma~\ref{closure} shows that there is a solution
of~$\Psi_1\cup\Psi_2$ taking values in~$C_{\Phi,d}$ without necessarily
meeting the requirement that~$\sum_i \alpha_ix_i < u$. We will prove that, in
fact, any such solution meets the additional constraint.

Let $x_i=a_i,b_i$ be two distinct satisfying assignments
for~$\Psi_1\cup\Psi_2$ such that $\sum_i \alpha_ia_i < u$ and
$\sum_i \alpha_ib_i \ge u$. We know that the first exists, and we assume
the second towards a contradiction. The two assignments must differ, so,
without loss of generality~$a_1\neq b_1$.
For $t\in\Q^\star$, with $t\ge0$, define the assignment
$x_i(t)=(1+t)a_i-tb_i$.
Since all constraints in~$\Psi_1\cup\Psi_2$ are
equalities, it is clear that the new assignment~$x_i(t)$
satisfies~$\Psi_1\cup\Psi_2$ for all~$t\in\Q^\star$.
Moreover, if $t\ge0$
\[
\sum_i \alpha_ix_i(t) \le \sum_i \alpha_ia_i - t\left(\sum_i \alpha_ib_i -
\sum_i \alpha_ia_i\right) < u
\]
Let $t=\frac{2r}{|b_1-a_1|}$. Then
\[
x_1(t) = a_1 + \frac{2r}{|b-a|}(a-b)
\]
is either $\ge 2r$ or $<0$ depending on the sign of~$(a-b)$. In either
case we have a solution~$x_i=x_i(t)$ of~$\Psi_1\cup\Psi_2$ satisfying~$\sum_i
\alpha_ix_i(t) < u$, which must therefore be a solution of~$\Phi$,
that does not satisfy $0< x_i\le r$.
\end{proof}

\begin{lemma}\label{pippo2}
Let $\Phi$ be a finite set of atomic $\tau_1$-formulas having free variables
in~$\{x_1\dotsc x_d\}$. Suppose that there are $0<l<r\in\Q^\star$ such that
all satisfying assignments of~$\Phi$ in the domain~$\Q^\star$
also satisfy $l< x_i< r$ for all~$i$. Let $\alpha_1\dotsc\alpha_d$
be rational numbers and~$u\in\Q^\star_{-1,1}$. Assume that the formulas in~$\Phi$ are simultaneously
satisfiable by a point~$(x_1\dotsc x_d)\in \Q^\star$ such that
$\sum_i \alpha_ix_i \le u$. Then the same formulas
are simultaneously satisfiable by a point $(x_1'\dotsc x_d')\in
(C^\star_{\Phi,d})^d\subseteq(\Q^\star)^d$ such that $\sum_i \alpha_ix_i' \le u$
where
\[
C^\star_{\Phi,d} = \{x + nx\boldsymbol\epsilon^3 \;|\; x \in C_{\Phi,d},\;
n\in\mathbb Z,\; -d\le
n\le d\} \subseteq \Q^\star_{-1,4} .
\]
\end{lemma}
\begin{proof}
We consider two cases: either
all satisfying assignments satisfy the inequality $\sum_i \alpha_ix_i \ge u$
or 
there is a satisfying assignment~$(x_1\dotsc
x_d)$ for~$\Phi$ such that $\sum_i \alpha_ix_i < u$.

In the first case, we
claim that all satisfying assignments, in fact, satisfy $\sum_i
\alpha_ix_i = u$. In fact, assume that $x_i=a_i,b_i$ are two satisfying
assignments such that $\sum_i \alpha_ia_i = u$ and~$v := \sum_i
\alpha_ib_i > u$. As in the proof of Lemma~\ref{closure2}, consider
assignments of the form~$x_i(t)=(1+t)a_i - t b_i$
for $t\in\Q^\star$. Clearly $\sum_i \alpha_ix_i(t) = u - t (v-u) < u$ for
all~$t>0$. As in Lemma~\ref{closure2}, the new assignment must
satisfy all equality constraints in~$\Phi$. Each inequality
constraint implies a strict inequality on~$t$ (remember that $\Phi$ only
has strict inequalities). Since all of these must be satisfied by~$t=0$,
there is an open interval of acceptable values of~$t$ around~$0$, and,
in particular, an acceptable~$t>0$. Our claim is thus established. 
Therefore, in this case,
it suffices to find any satisfying assignment for~$\Phi$
taking values in~$C^\star_{\Phi,d}$. The assignment is now constructed as in the
proof of Lemma~\ref{pippo}, replacing the formal symbol~$\boldsymbol\epsilon$ in that
proof by~$\boldsymbol\epsilon^3$. Namely take a satisfying
assignment~$x_i = b_i$ for~$\Phi$, and, by Lemma~\ref{closure2}, one
satisfying assignment~$x_i=c_i$ for~$\bar\Phi$ taking values
in~$C_{\Phi,d}$.
Observe that the hypothesis that all solutions of~$\Phi$ satisfy~$l<x_i$
for all~$i$ is used here to ensure that all solutions of~$\bar\Phi$ assign
positive values to the variables, which is required by
Lemma~\ref{closure2}.
Let $-d\le n_1\dotsc n_d\le d$ be integers such that for
all
$i,j$
\begin{align*}
n_i < n_j \quad &\text{if and only if} \quad {\textstyle \frac{b_i}{c_i}
<\frac{ b_j}{c_j}}\\
0 < n_i \quad &\text{if and only if} \quad {\textstyle 1
<\frac{b_i}{c_i}}\\
n_i < 0 \quad &\text{if and only if} \quad {\textstyle \frac{b_i}{c_i} <
1}
\end{align*}
The assignment
$y_i = c_i + n_ic_i\boldsymbol\epsilon^3$ can be seen to satisfy
all formulas of~$\Phi$ by the same check as in the proof of
Lemma~\ref{pippo}. Observe that we have to replace $\eps$
in Lemma~\ref{pippo} by~$\eps^3$ here, so
that~$\Q^\star_{-1,1} \cap \,\eps^3 \Q^\star_{-1,1} = \emptyset$.

For the second case, fix a satisfying assignment $x_i=b_i$.
By Lemma~\ref{closure2} there is an
assignment~$x_i=c_i\in C_{\Phi,d}$ such that $\sum_i \alpha_ic_i < u$
and this assignment satisfies~$\bar\phi$ for all~$\phi\in\Phi$. From these
two assignments construct the numbers~$n_i$ and then the assignment~$y_i =
c_i + n_ic_i\boldsymbol\epsilon^3$ as before. For the same reason
it is clear that the new assignment satisfies~$\Phi$. To conclude that
$\sum_i \alpha_iy_i < u$ we write
\[
\sum_i \alpha_iy_i = \sum_i \alpha_ic_i + \boldsymbol\epsilon^3
\sum_i \alpha_in_ic_i < u
\]
because the first summand is in~$\Q^\star_{-1,1}$ and~$<u$, therefore the
second summand is neglected in the lexicographical order.
\end{proof}

\begin{lemma}\label{vcsp-sampling}
Let $\Phi$ be a finite set of atomic $\tau_0$-formulas having free variables
in~$\{x_1\dotsc x_d\}$.
Let $u$, $\alpha_1\dotsc\alpha_d$
be rational numbers. Then the following are equivalent
\begin{enumerate}
\item The formulas in~$\Phi$ are simultaneously
satisfiable in $\Q$, by a point $(x_1\dotsc x_d)\in\Q^d$ such that
$\sum_i \alpha_ix_i \le u$.
\item The formulas in~$\Phi$ are simultaneously
satisfiable in $D_{\Phi,d}\subseteq\Q^\star$,
by a point~$(x_1'\dotsc x_d')\in D_{\Phi,d}^d$
such that $\sum_i \alpha_ix_i' \le u$,
where the set $D_{\Phi,d}$ is defined as follows
\begin{align*}
D_{\Phi,d} &:= -C^{\star}_{\Phi',d} \cup \{0\} \cup C^{\star}_{\Phi',d} 
\subseteq \Q^\star_{-1,4} \\
\Phi' &:= \Phi \cup \{x > \eps,\;x < -\eps,\;x > -\eps^{-1},\;x <
\eps^{-1}\}
\end{align*}
\end{enumerate}
\end{lemma}
\begin{proof}
The implication $\textit{2}\rightarrow\textit{1}$\/ is immediate observing
that the conditions~$\Phi$ and~$\sum_i \alpha_ix_i \le u$ are
first-order definable in~$\mathfrak S$. In fact, any assignments with
values in~$D_{\Phi,d}$ satisfying the conditions is, in particular, an
assignment in~$\Q^\star$, and, by completeness of the first order theory
of ordered $\Q$-vector spaces, we have an assignment taking values
in~$\Q$.

For the vice versa, fix any assignment $x_i=a_i$ with $a_i\in\Q$
for~$i\in\{1\dotsc d\}$. We pre-process the formulas in $\Phi$ producing a
new set of atomic formulas~$\Phi'$ as follows. We replace all variables
$x_i$ such that $a_i=0$ with the constant~$0=0\cdot1$. Then we replace
each of the remaining variables $x_i$ with either~$y_i$ or~$-y_i$
according to the sign of~$a_i$. Finally, we add the constraints $\eps<y_i$
and $y_i<\eps^{-1}$ for each of these variables. Similarly we produce
new coefficients $\alpha'_i = \text{sign}(a_i)\alpha_i$.
It is clear that
the new
set of formulas~$\Phi'$ has a satisfying assignment in positive rational
numbers with~$\sum_i \alpha'_iy_i \le u$.
Observing that
a positive rational~$x$ always satisfies~$\eps<x<\eps^{-1}$, we see that $\Phi'$ satisfies the
hypothesis of Lemma~\ref{pippo2} with $l=\eps$ and~$r=\eps^{-1}$. Hence the
statement.
\end{proof}

Two roads diverge now. Clearly the formulas~$\Phi$ in
Lemma~\ref{vcsp-sampling} are going to define a piece of the domain of a
piecewise linear homogeneous function, while the coefficients~$\alpha_i$
define the function on that piece. We could decide to interpret our PLH
functions in the domain~$\Q^\star$ or we could decide to substitute a
suitably small rational value of~$\eps$ in the formal expression
of~$D_{\Phi,d}$ and map the problem to~$\Q$. In the first case we have to
{\it transfer}\/ the known approaches for~$\Q$ to the new domain, in the second
case we can~{\it use}\/ them (after having computed a suitable~$\eps$). It
is not clear which road is the less
% spellckeck: do not correct traveled to travelled!
traveled by. For reasons that will be
discussed in Subsection~\ref{discussion} we take the one of transferring.

It is obvious that one can extend Definition~\ref{vcspdef} considering
$\vcsp$s whose cost functions take values in any totally ordered
ring containing~$\Q$, and in particular in~$\Q^\star$. We will need to
establish the basics of such extended VCSPs.
More precisely, we will need to prove Corollary~\ref{hereafter}
hereafter, that builds on a {\it fully combinatorial}
algorithm (Theorem~\ref{iworthm}) due to Iwata and
Orlin~\cite{IwataOrlin}.

\begin{definition}
Let $R$ be a totally ordered commutative ring with unit. 
A problem over $R$ can be solved in \emph{fully combinatorial
polynomial-time} if there exists a polynomial-time (uniform) machine on
$R$ in the sense of \cite{BCSS} (see Chapters 3-4; such a machine operates on strings of symbols that represent elements of an ordered commutative ring, rather than bits as in classical Turing machines) solving it by
performing only additions and comparisons of elements in $R$ as
fundamental operations. (Notice that in such a machine there are no
machine-constants except $1$.)
\end{definition}

\begin{definition}
A collection $\mathcal C$ of subsets of a given set $Q$ is said to
be a \emph{ring family} if it is closed under union and intersection.
\end{definition}

Equivalently, a ring family is a distributive sublattice of~$\mathcal P(Q)$ with respect to
union and intersection, notably every distributive lattice can be
represented in this form (Birkhoff's representation theorem).
Computationally, we represent a ring family
following~\cite[Section~6]{SCHRIJVER2000346}. Namely, fixed a
representation for the elements of~$Q$, the ring family~$\mathcal C$ is
represented by the smallest set~$M\subseteq Q$ in~$\mathcal C$, and an oracle that given an
element of~$v\in Q$ returns the smallest $M_v\subset Q$ in~$\mathcal C$
such that $v\in M_v$. The construction of Section~6
in~\cite{SCHRIJVER2000346} proves that any algorithm capable of
minimising submodular set functions can be used to minimise submodular set
functions defined on a ring family represented in this way. Observe that
this construction is fully combinatorial.

\begin{theorem}[Iwata-Orlin \cite{IwataOrlin} +
Schrijver~\cite{SCHRIJVER2000346}] \label{iworthm}
There exists a fully combinatorial polynomial-time algorithm over
$\Q$ that
\begin{itemize} 
\item taking as input a finite set $Q=\{1,\ldots,n\}$ and a ring family,
$\mathcal C \subseteq 2^Q$, represented as
in~\cite[Section~6]{SCHRIJVER2000346} (namely as above),
\item having access to an oracle computing a submodular set-function $\psi
\colon \mathcal C \rightarrow \Q$,
\end{itemize}
computes an element $S \in \mathcal C$ such that $\psi(S)=\min_{A\in \mathcal C}
\psi(A)$ in
time bounded by a polynomial $p(n)$ in the size $n$ of the domain.
\end{theorem}

\begin{corollary}\label{hereafter}
Let $R$ be a totally ordered commutative ring with unit (for
instance~$\Q^\star$),
there exists a fully combinatorial polynomial-time algorithm over~$R$
that
\begin{itemize} 
\item taking as input a finite set $Q=\{1,\ldots,n\}$ and a ring family,
$\mathcal C \subseteq 2^Q$, represented as in
Theorem~\ref{iworthm}, 
\item having access to an oracle computing a submodular set-function $\psi
\colon \mathcal C \rightarrow R$,
\end{itemize}
computes an element $S \in \mathcal C$ such that $\psi(S)=\min_{A\in \mathcal C}
\psi(A)$ in
time bounded by a polynomial $p(n)$ in the size $n$ of the domain.
\end{corollary}
\begin{proof}
Theorem~\ref{iworthm} provides a fully combinatorial algorithm to minimise
submodular functions that, over~$\Q$, runs in polynomial time and computes
a correct result. We claim that any such algorithm must be correct and run
in polynomial time over~$R$ as well.
To show this, we prove the following:
        \begin{enumerate}
                \item The algorithm terminates in time $p(n)$, where
$p(n)$ is as in Theorem \ref{iworthm}.
                \item The output of the algorithm coincides with the
minimum of $\psi$.
        \end{enumerate}
Let $R_\psi$ denote the subgroup of the additive group $(R,+)$ generated by
$\psi(\mathcal C)$, and let $E_\psi:=\{g_1,\ldots,g_m\}$  be a set of free
generators of $R_\psi$.
%{\color{red}(note that since $\mathcal C$ is
%finite, so is $\psi(\mathcal C)$, therefore a finite system of generators
%of $R_\psi$ exists)}.
For any tuple $ r=(r_1,\ldots, r_m) \in
\Q^m$, we define a group homomorphism $h_{r}\colon
R_\psi \rightarrow \Q$, by $h_{r}(g_i)=r_i$. Let $R_N:=N\cdot(E_\psi \cup
\{0\}\cup -E_\psi)$ be the
subset of $R$ consisting of the elements of the form $\pm x_1 \pm  x_2
\ldots \pm x_k$, with
$k \leq N$, $x_1,x_2, \ldots, x_k \in E_\psi$.

In general, the group homomorphisms ${h_r}$ are not order preserving. We claim that for all $N$,
there exists $r \in \Q^m$ such that ${h_r}|_{R_N}$ is order preserving.
To see this, assume that no such tuple $r$
exists. The inequalities denoting that ${h_r}|_{R_N}$ is order
preserving are expressed by a finite linear program $P$ in the variables
$r_1, \ldots,r_m$. By the assumption and  Farkas' lemma
%{\color{purple}(see \cite{put ref}, Theorem bla)} 
there is a linear
combination (with coefficients in $\mathbb Z$) of the inequalities of $P$
which is contradictory. Therefore $P$ is contradictory in any ordered ring, and, in particular, in $R$.
However $r_i=g_i$, for all $i \in \{1,\ldots, m\}$, is a valid
solution of~$P$ in $R$.

Fix $N:=\hat N\cdot 2^{p(n)}$, where $\hat{N}$ is such that
$\psi(S)\in R_{\hat N}$ for all $S \in \mathcal C$. For this $N$, let
$r$ be a tuple satisfying the claim. We run two parallel
instances of the algorithm, one over $R$ with input $\psi$, and the other
in $\Q$ with input ${h_ r}\circ \psi$. We can prove that the
two runs are \textit{exactly parallel} for at least $p(n)$ steps,
therefore, since the second run stops within these $p(n)$ steps, also the
first one must do so. Formally, we prove, in a register machine model,
that, at each step $i \leq p(n)$, if a register  contains the
value $g$ in the first run, it must contain the value ${h_r}(g)$ in the
second. This is easily established proving by induction on $i$ that a
value computed at step $i$ must be in $R_{\hat N\cdot2^i}$.
Point~1 is thus established.

For point~2, let $\min_R$ and $\min_{\Q}$ be the output of the
algorithm over $(\psi,R)$ and $({h_r} \circ \psi, \Q)$, respectively.
The induction above shows, in particular, that~$\min_{\Q}=h_r(\min_R)$.
We know that $h_r(\min_R)=\min_{\Q}={h_r} \circ \psi (S_0)$ for some $S_0$ and
${h_r} \circ \psi (S)\ge\min_{\Q}=h_r(\min_R)$ for each element~$S$ of~$\mathcal C$.
        By our choice of $N$, the corresponding relations, $\min_R=
\psi(S_0)$ and $\psi(S)\ge\min_R$ for each element~$S$ of~$\mathcal C$, must hold in $R$.
\end{proof}

The following lemma is essentially contained
in~\cite[Theorem~6.7]{cohen2006complexity},
except that we replace the set of values by an arbitrary totally ordered
commutative ring with unit~$R$.
To state the lemma properly, we need to observe that, given a submodular
function~$f$ defined on~$Q^d$, where~$Q=\{1,\dotsc,n\}$, we can associate
to it the following ring family~$\mathcal C_f\subseteq\mathcal
P(Q\times\{1,\dotsc,d\})$. For every $x=(x_1,\dotsc,x_d)\in Q^d$ define
\[
\mathcal C_x := \{(q,i)\;|\;q\in Q,\,q\le x_i\}\subseteq Q\times\{1,\dotsc,d\}
\]
then we let~$C_f$ be the union of~$C_x$ for all~$x$ such
that~$f(x)<+\infty$.

\begin{lemma}\label{lemmaringfamily}
Let $R\supseteq\Q$ be a totally ordered ring.
There exists a fully combinatorial polynomial-time algorithm over~$R$
that
\begin{itemize} 
\item taking as input a finite set $Q=\{1,\ldots,n\}$ and an integer~$d$,
\item having access to an oracle computing a partial submodular
$f \colon Q^d \rightarrow R$,
\item given the representation of~$\mathcal C_f$ as in
Theorem~\ref{iworthm},
\end{itemize}
computes an $x\in Q^d$ such that $f(x)$ is minimal, in time polynomial
in~$n$ and~$d$.
\end{lemma}
\begin{proof}
The problem reduces to minimising a submodular set-function on the ring
family~$\mathcal C_f$, for the details see the proof of Theorem~6.7
in~\cite{cohen2006complexity}.
\end{proof}
%{\color{red} TODO: WRITE THIS PROPERLY}
%
%        Let $\tilde I$ be an instance of $\vcsp(\Gamma)$ defined by
%the set of variables $\tilde V$, the domain $\tilde D$, and the objective
%function $\tilde \phi_I$ with values in $\Q^\star$. As in the proof
%of~\cite[Theorem~6.7]{cohen2006complexity}, using the result
%of~\cite{JEAVONS1995327}, the satisfiability problem can
%be solved, and a solution found if any exists, in polynomial-time.
%we define the set $Q:=\tilde D\times \tilde V$ and for every
%element $s$ of $\tilde D^{\tilde V}$, assignment having a finite cost, we
%define also \]Q_s=\{(d,v)\mid v \in \tilde V \wedge d\leq s(v)\}.\]
%        Observe that $Q$ has cardinality polynomial in the size of the
%instance $\tilde I$. It is easy to check that for all $s,t \in \tilde
%D^{\tilde V}$ \begin{gather*}
%        Q_s\cup Q_t=Q_{\max(s,t)},\\
%        Q_s\cap Q_t=Q_{\min(s,t)}.\\
%        \end{gather*}
%        Hence the collection $\mathcal C=\{Q_s \mid s \text { of finite
%cost} \}$ is a finite ring family.\\
%        We define the non-standard real valued function $\psi$ on
%$\mathcal C$, by setting \[\psi(Q_s)=\tilde \phi_I (s).\]
%        From the submodularity of $\tilde{\phi}$ it follows that for all
%assignment $s, t$ \[\psi(Q_s)+\psi(Q_t)\geq \psi
%(Q_{\max(s,t)})+\psi(Q_{\min(s,t)}).\]
%        Therefore, $\psi$ is a submodular set cost function on the ring
%family  on $\mathcal C$ with values in $\mathbb R^\star$.
%\end{proof}

\begin{proof}[Proof of Theorem~\ref{thm:tract}.]
Similarly to the proof of Theorem~\ref{thm-csp}, we will use a sampling
technique. Namely, given an instance $I$ of~$\vcsp(\g)$,
we will employ Lemma~\ref{vcsp-sampling}
to fix a finite structure~$\g_I$, of size (and also representation size) polynomial in~$|V_I|$, having a
subset~$\Q^\star_I$ of~$\Q^\star_{-1,4}$ as domain, such
that the variables $V_I$ of~$I$ have an assignment in~$\Q$ having cost~$\le
u_I$ if and only if they have one in~$\Q^\star_I$.
Once we have~$\g_I$, we will conclude by
Lemma~\ref{lemmaringfamily}.

The structure~$\g_I$ obviously needs to have the same signature~$\tau$
as~$\g$. For each function symbol~$f\in\tau$ we consider a $\tau_0$-formula~$\phi_f$ defining~$f^{\g}$ and we let $f^{\g_I}$ be the
function defined in~$\Q^\star$ by the same formula. By
Proposition~\ref{equiv} the choice of~$\phi_f$ is immaterial. Remains to
define the domain~$\Q^\star_I\subset\Q^\star$.

By quantifier-elimination (Theorem~\ref{thm:qe}),
any piecewise linear homogeneous cost function $f \colon {\Q}^n \to \Q \cup \{
+\infty \}$ 
can be written as  
\[f(x_1,\dots,x_{\text{ar}(f)}) \;=\; \begin{cases}
t_{f,1} &  \text{ if } \chi_{f,1} \\
\cdots \\
t_{f,m_f} &  \text{ if } \chi_{f,m_f} \\
+\infty & \text{ otherwise}
\end{cases}\]
where $t_{f,1},\dots,t_{f,m_f}$ are $\tau_0$-terms, 
$\chi_{f,1},\dots,\chi_{f,m_f}$ are conjunctions of atomic $\tau_0$-formulas
with variables from $\{x_1,\dots,x_{\text{ar}(f)}\}$, and $\chi_{f,1},\dots,\chi_{f,m_f}$ define disjoint subsets of $\Q^n$. We fix such a
representation for each of the cost functions in $\g$, and we collect all
the atomic formulas appearing in every one of the
conjunctions~$\chi_{f,i}$, for $f\in\g$ and $1\le i\le m_f$, into the
set~$\Phi$. Clearly $\Phi$ is finite and depends only on the fixed
language~$\g$. Finally, $\Q^\star_I := D_{\Phi,|V_I|}$ as defined in
Lemma~\ref{vcsp-sampling}.

The size of~$\Q^\star_I$ is clearly polynomial by simple inspection of the
definition. Its representation has also polynomial size if the numbers are
represented in binary, and, with this representation, the evaluation of
$f^{\g_I}$ for $f\in\tau$ takes polynomial time.

Given an assignment $\alpha\colon V_I\to\Q^\star_I$ of value~$\le u_I$ we
have, a fortiori, an assignment~$\colon V_I\to\Q^\star$ of value~$\le
u_I$, hence, by the usual completeness of the first order theory of
ordered $\Q$-vector spaces, there is an assignment~$\colon V_I\to\Q$ with
the same property.

Finally let $\beta\colon V_I\to\Q$ be an assignment having value~$\le
u_I$. We need to find an assignment~$\beta'\colon V_I\to\Q^\star_I$ with
value~$\le u_I$. Let
\[
\phi_I = \sum_{i=1}^m f_i(x_1^i,\dotsc,x_{\text{ar}(f_i)}^i)
\]
(cf.\ Definition~\ref{vcspdef}). For each $i\in\{1,\dotsc,m\}$ select
the formula $\chi_i$ among $\chi_{f_i,1},\dotsc,\chi_{f_i,m_{f_i}}$ that
is satisfied by the assignment~$\beta$. Clearly, the conjunction of atomic
$\tau_0$-formulas~$\chi:=\bigwedge_{i=1}^m \chi_i$ is satisfiable. 
Moreover, $\phi_I$ restricted to the subset of~$(\Q^\star)^{|V_I|}$ where $\chi$
holds is obviously linear. Then we can apply Lemma~\ref{vcsp-sampling},
and we get an assignment~$\beta'$ whose values are in~$D_{\chi,|V_I|}$ (where,
by a slight abuse of notation, we wrote $\chi$ for the set of conjuncts of $\chi$). We conclude observing that $D_{\chi,|V_I|}
\subseteq D_{\Phi,|V_I|} = \Q^\star_I$.

It remains to check that Lemma~\ref{lemmaringfamily} applies to our
situation. Clearly $R=\Q^\star$, the function~$f$ is the objective function described by~$\phi_I$,
and we let $n=|\Q^\star_I|$ so that we identify $Q$ with an enumeration of~$\Q^\star_I$
in increasing order (which can be computed in polynomial time without
obstacle). The oracle computing~$f$ is straightforward to implement since
sums and comparisons in~$\Q^\star$ merely reduce to the corresponding
component-wise operations on the coefficients. The representation of the
ring family~$\mathcal C_f$ requires a moment of attention. To construct
the oracle, as well as to find the minimal element~$M$, we need an
algorithm that, given a variable~$x\in V_I$ and a value~$q\in\Q^\star_I$,
finds the component-wise minimal feasible assignment~$\alpha_x\colon
V_I\to \Q^\star_I$ that gives to~$x$ a value~$\ge q$ (which exists
observing that the set of feasible assignments is min-closed). This
algorithm is easy to construct observing that the feasibility problem is
a min-closed CSP. We describe how to find~$M$, the procedure for~$M_v$ is
essentially the same.

Suppose that for each variable $x\in V_I$ we can find the smallest
element~$\beta(x)\in\Q^\star_I$ such that there is a feasible
assignment~$\gamma_x\colon V_I\to \Q^\star_I$ such that
$\gamma_x(x)=\beta(x)$, then, by the min-closure, $\beta = \min_{x\in
V_I} \gamma_x$ is the minimal assignment. To find $\beta(x)$ it is
sufficient to solve the feasibility problem, using Theorem~\ref{thm-csp},
adding a constraint~$x\ge k$ for increasing values of~$k\in \Q^\star_I$.
\end{proof}

\subsection{Why \mathshift\Q^\star\mathshift?}\label{discussion}

It might appear that in more than one occasion we chose to work in
mathematically overcomplicated structures. For example, the algorithm for
Theorem~\ref{thm:tract} merely manipulates points in~$\Q^\star_{-1,4}$,
which is just~$\Q^6$ with the lexicographic order, yet we went to the
trouble of introducing the field of formal Laurent power series. More
radically, one might observe that assigning a rational value to the formal
variable~$\eps$ small enough, we could have mapped the entire algorithm
to~$\Q$, thus dispensing with non-Archimedean extensions entirely. As we
believe to owe to our reader an explanation for this, we better give three.

First, the idea of limiting our horizon to~$\Q^\star_{-1,4}\simeq\Q^6$
might seem a simplification, but, in practice, it makes things more
complicated. For example, in several places we used the fact $\Q^\star$
has a field structure to make proofs more direct and intuitive. Second,
going for the most elementary exposition, namely choosing an~$\eps$ small
enough explicitly, would have completely obfuscated any idea in the
arguments, which would have been converted in some unsightly bureaucracy
of inequalities. Even computationally, mapping everything to~$\Q$ is
tantamount as converting arrays of small integers into bignums by
concatenation, hardly an improvement. Finally, the existence of an
efficiently computable rational value of~$\eps$ that works is not
necessary for our method, even though, in this case, a posteriori, such
an~$\eps$ exists.

Our third, and most important, justification, is that we desire to present
the approach used in this paper, which is quite generic, as much as the
results. To this aim, it is convenient to express the underlying ideas in
their natural language. For example, Corollary~\ref{hereafter} is a
completely black-boxed way to transfer combinatorial algorithms between
domains that share some algebraic structure. We do not claim great
originality in that observation, yet we believe that the method is
interesting, and worthy of being presented in the cleanest form that we
could devise.

\section{Maximal Tractability}
\label{sect:maximal} 
A \emph{sublanguage} of a valued constraint language $\Gamma$ is a valued constraint  language that can be obtained from $\g$ by dropping some of the cost functions.

\begin{definition}
Let $\mathcal V$ be a class of valued constraint languages over a fixed domain $D$ and let $\g$ be a language of $\mathcal V$. We say that $\g$ is \emph{maximally tractable within} $\mathcal V$ if \begin{itemize}
\item $\vcsp(\g')$ is polynomial time solvable for every finite sublanguage $\g'$ of $\g$; and
\item for every valued constraint language $\Delta$ in $\mathcal V$ properly containing $\g$, there exists a finite sublanguage $\Delta'$ of $\Delta$ such that $\vcsp(\Delta')$ is NP-hard.
\end{itemize}
\end{definition}

We will make use of the following result.

\begin{theorem}[Cohen-Cooper-Jeavons-Krokhin, \cite{cohen2006complexity}, Theorem 6.7]\label{ccjk2}
	Let $D$ be a finite totally ordered set. 
	Then the valued constraint language consisting of all submodular cost functions over $D$ is maximally tractable within the class of valued constraint languages over $D$. 
\end{theorem}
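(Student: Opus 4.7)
My plan is to split the proof into the tractability half (every finite sublanguage of the submodular language is polynomial-time solvable) and the maximality half (adding any non-submodular cost function produces an NP-hard sublanguage), and to reduce both halves to known facts about submodular minimization on distributive lattices and Boolean non-submodular cost functions respectively.

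For the tractability half I would argue as follows. Fix a finite totally ordered set $D$ and view $D^n$ as a distributive lattice with componentwise $\min$ and $\max$; this is the lattice structure with respect to which submodularity of cost functions $D^n\to\Q\cup\{+\infty\}$ is defined. Given any instance $I$ of $\vcsp(\g')$ for a finite sublanguage $\g'$ of submodular cost functions, the objective function $\phi_I$ is a finite sum of submodular functions in $|V_I|$ variables (each $f_i$ depending on a subset of variables is submodular on $D^{V_I}$ via the trivial extension). Since a sum of submodular functions is submodular and $+\infty$ values are preserved under $\min$ and $\max$, $\phi_I$ itself is a submodular function on $D^{V_I}$. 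Its minimum can be computed in strongly polynomial time by any general submodular function minimization algorithm on distributive lattices (for instance the Iwata--Orlin algorithm cited in the paper, applied after the standard encoding of a distributive lattice into a Boolean lattice via the Birkhoff representation, or directly); comparing this minimum to $u_I$ decides the instance.

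For the maximality half I would proceed by reduction. Let $f\colon D^n\to\Q\cup\{+\infty\}$ be any cost function that is not submodular, and let $\Delta'$ be the sublanguage consisting of $f$ together with a convenient finite collection of submodular cost functions (unary ``indicator'' cost functions $\chi_a(x) := 0$ if $x=a$ and $+\infty$ otherwise, and a binary submodular cost function expressing $\min(x,y)-\max(x,y)$, which is submodular because it equals a negative of a modular minus twice a $\max$, all of which are submodular on a chain). The non-submodularity of $f$ gives tuples $x,y\in D^n$ with $f(\min(x,y))+f(\max(x,y))>f(x)+f(y)$, so on the four-point sublattice $L=\{\min(x,y),x,y,\max(x,y)\}\subseteq D^n$ the restriction of $f$ is not submodular. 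By freezing coordinates on which $x$ and $y$ agree using the unary indicators $\chi_a$, we may reduce to the Boolean sublattice where $x$ and $y$ disagree, obtaining an essentially two-valued instance. On a two-element chain $\{0,1\}$, a binary cost function $g$ fails to be submodular exactly when $g(0,1)+g(1,0)<g(0,0)+g(1,1)$, and it is a classical fact (going back to the Boolean dichotomy of Cohen--Cooper--Jeavons--Krokhin and ultimately to the NP-hardness of weighted $\mathrm{Max\text{-}Cut}$) that any such $g$, together with the constants $0$ and $1$, can express weighted $\mathrm{Max\text{-}Cut}$ via a primitive-positive reduction, yielding NP-hardness.

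The main obstacle, and the technically delicate step, is the last reduction: passing cleanly from a non-submodular $f$ of arbitrary arity $n$ over a general finite chain $D$ to a non-submodular binary Boolean cost function. Merely restricting to the sublattice $L$ does not immediately give a \emph{binary} function, so one must either average out the extra coordinates using submodular gadgets (and verify that submodularity of the gadgets guarantees that the averaged two-variable function inherits the strict non-submodularity of $f$) or invoke a more abstract ``expressive power'' argument showing that $f$ can pp-define, modulo submodular gadgets, a binary Boolean non-submodular cost function. I would expect that the cleanest path is the expressive-power argument from the original Cohen--Cooper--Jeavons--Krokhin paper, which combines the two-element dichotomy with an induction on $|D|$ using the chain structure; the inductive step uses that submodular functions are closed under the fundamental operations preserving pp-definability over a chain (substitution of constants and of $\min,\max$ of variables).
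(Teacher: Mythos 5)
This theorem is quoted in the paper as a known result from Cohen--Cooper--Jeavons--Krokhin; the paper gives no proof of it, so there is no in-paper proof against which to compare your attempt. Judged on its own terms, your outline of the tractability half is sound: the objective of any instance is a sum of submodular cost functions over the distributive lattice $D^{V_I}$, hence submodular, and it can be minimised in polynomial time after a monotone Boolean encoding of the chain $D$ (the $+\infty$ values require a feasibility pre-check, but this is standard). That is indeed the structure of the argument in the cited source.

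For the maximality half you have correctly identified where the real work lies, and then left it open. Restricting the non-submodular $f$ to the four-point sublattice $L=\{\min(x,y),x,y,\max(x,y)\}$ and freezing the coordinates on which $x$ and $y$ agree does not by itself yield a binary Boolean cost function, nor does it show how to simulate weighted Max-Cut; the reduction that extracts a hard binary Boolean instance from a non-submodular $f$ of arbitrary arity over a chain of arbitrary size is precisely the technical content of Theorem 6.7 in \cite{cohen2006complexity}, and your sketch defers to it rather than reproducing it. There is also a concrete slip in the gadget you propose: $\min(x,y)-\max(x,y)=x+y-2\max(x,y)$ is \emph{supermodular}, not submodular, on a chain (e.g.\ on $\{0,1\}^2$ one has $0+0\not\le -1+(-1)$), since $\max$ is submodular and hence $-2\max$ is supermodular; so this cannot be used as a submodular auxiliary cost function. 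As written, the hardness half is an honest plan with a named missing step rather than a proof, so it cannot be accepted as a proof of the theorem, although it does correctly locate the difficulty and the reference where it is resolved.
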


We show that the class of submodular piecewise linear homogeneous languages is maximally tractable within the class of PLH valued constraint languages.

\begin{definition}\label{defchi}
	Given a finite set $D \subset \Q$, we define the partial function $\chi_D \colon \Q \rightarrow \Q$ by \[\chi_D(x)=\begin{cases}
	0 & x\in D\\
	+\infty & x \in \Q \setminus D.
	\end{cases}\]
\end{definition}

%\begin{remark}
For every finite set $D\subset \Q$, the cost function $\chi_D$ is submodular and PLH. 
%\end{remark}

\begin{definition}
	Given a finite domain $D \subset \Q$ and a partial function $f\colon D^n \rightarrow \mathbb Q$ we define the \emph{canonical extension} of $f$ as $\hat{f}\colon \Q^n \rightarrow \Q$, by \[\hat f(x)=\begin{cases}
	f( x) & x \in D^n\\
	+\infty & \text{otherwise}.
	\end{cases}\]
\end{definition}

%\begin{remark}\label{canext}
Note that the canonical extension of a submodular function over a finite domain is submodular and PLH. 
%\end{remark}

\begin{theorem}\label{th:mt}
The valued constraint language consisting of all submodular PLH cost functions is maximally tractable within the class of PLH valued constraint languages. 
\end{theorem}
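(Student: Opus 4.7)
The plan is to verify the two conditions of maximal tractability separately. Tractability of every finite sublanguage of the class of submodular PLH cost functions is immediate from Theorem~\ref{thm:tract}, so the real work is on the hardness side.

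For hardness, let $\Delta$ be any PLH valued constraint language properly containing the class of all submodular PLH cost functions. Then $\Delta$ contains some PLH cost function $f\colon\Q^n\to\Q\cup\{+\infty\}$ that is not submodular. The idea is to reduce a finite-domain NP-hard VCSP to $\vcsp(\Delta')$ for a suitable finite sublanguage $\Delta'$ of $\Delta$, using $\chi_D$ to confine the variables to an appropriate finite set $D\subset\Q$.

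Concretely, I would first pick tuples $x,y\in\dom(f)$ witnessing the failure of submodularity of $f$ and let $D\subset\Q$ consist of all coordinates occurring in $x$, $y$, $\min(x,y)$, and $\max(x,y)$. Then the restriction $f|_{D^n}$, viewed as a cost function on the finite totally ordered set $D$, is still non-submodular: if the witness is a strict inequality $f(\max(x,y))+f(\min(x,y))>f(x)+f(y)$ with all four values finite the inequality is preserved under restriction, and if the witness is a closure failure (say $\max(x,y)\notin\dom(f)$ while $x,y\in\dom(f)$) this is also preserved, since by construction $\max(x,y)\in D^n$. By Theorem~\ref{ccjk2} applied to $D$, there is a finite set $\Sigma$ of submodular cost functions on $D$ such that $\vcsp(\Sigma\cup\{f|_{D^n}\})$ is NP-hard. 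I would then define the finite sublanguage $\Delta':=\{f,\chi_D\}\cup\{\hat g\mid g\in\Sigma\}$ of $\Delta$; by the remarks preceding the theorem, $\chi_D$ and each canonical extension $\hat g$ is submodular and PLH, hence belongs to $\Delta$. A straightforward polynomial-time reduction maps an instance of $\vcsp(\Sigma\cup\{f|_{D^n}\})$ to an instance of $\vcsp(\Delta')$ on the same variables by replacing each $g\in\Sigma$ with $\hat g$, each occurrence of $f|_{D^n}$ with $f$, and adding the crisp constraint $\chi_D(v)$ for every variable $v$; the $\chi_D$ constraints force every variable to take a value in $D$, so corresponding instances have the same optimum, and therefore $\vcsp(\Delta')$ is NP-hard.

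The main obstacle is conceptual rather than technical: to bridge the infinite domain $\Q$ and the finite-domain dichotomy of Theorem~\ref{ccjk2} we must carry the non-submodularity of $f$ down to a finite subset on which the Cohen--Cooper--Jeavons--Krokhin hardness result applies, and simultaneously realise the resulting finite-domain hard instance inside $\vcsp(\Delta')$. The two ingredients that make this work, namely $\chi_D$ being submodular PLH and the canonical extension preserving submodularity and PLH-definability, are precisely the points isolated in the two observations just above the statement of the theorem, so once these are in place the rest of the argument is a short reduction.
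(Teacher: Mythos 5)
Your proof is correct and follows essentially the same route as the paper: extract a finite set $D$ of coordinates from tuples witnessing the failure of submodularity, apply the Cohen--Cooper--Jeavons--Krokhin finite-domain maximal tractability theorem to obtain a hard finite sublanguage over $D$, and lift it back to $\Q$ via canonical extensions together with the crisp indicator $\chi_D$. The only cosmetic difference is that you explicitly separate the strict-inequality and closure-failure cases when arguing that $f|_{D^n}$ is still non-submodular, which the paper leaves implicit; the substance of the reduction is identical.
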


\begin{proof}
	Polynomial-time tractability of the VCSP for finite sets of submodular PLH cost functions has been shown in Theorem~\ref{thm:tract}. 
	
	Now suppose that $f$ is a cost function over ${\mathbb Q}$ that is not submodular, i.e., 
	there exists a couple of points, $ a:=(a_1,\ldots,a_n)$, $ b:=(b_1,\ldots,b_n) \in \Q^n$ such that \[f( a)+f( b)<f(\min( a,  b))+f(\max( a,  b)).\]
	Let $\Gamma_D$ be the language of all submodular functions on \[D:=\{a_1,\ldots,a_n,b_1,\ldots,b_n\}\subset \Q.\] 
	Notice that $f|_D$ is not submodular, for our choice of $D$.
	Therefore, by Theorem \ref{ccjk2}, there exists a finite language $\Gamma'_D \subset \Gamma_{D}$ such that $\vcsp(\Gamma'_D \cup \{f|_D\})$ is NP-hard.
	
	We define the finite submodular PLH language $\Gamma'$ by replacing every cost function $g$ in $\Gamma'_{D}$ by its canonical extension $\hat g$. 
	Then $\Gamma' \cup \{f,\chi_D\}$, where $\chi_D$ is defined as in Definition \ref{defchi}, has an NP-hard VCSP. 
	Indeed, for every instance  $I$ of $\vcsp(\Gamma'_{D}\cup\{f|_D\})$, we define an instance $J$ of $\vcsp(\Gamma' \cup \{f,\chi_D\})$ in the following way: 
	\begin{itemize}
		\item replace every function symbol $g$ in $\phi_I$ by the symbol for its canonical extension,
		\item replace the function symbol for ${f}|_{D}$ 
		in $\phi_I$ by $f$, and
		\item add to $J_\phi$ the summand $\chi_D(v)$ for every variable $v \in V_I$.
	\end{itemize}
	Because of the terms involving $\chi_D$, 
	the infimum of $\phi_J$ is smaller than $+\infty$ if, and only if, it is attained in a point having coordinates in $D$. Therefore, the infimum of $\phi_J$ coincides with the infimum of $\phi_I$. 
	Since $J$ is computable in polynomial-time from $I$, the NP-hardness of $\vcsp(\Gamma' \cup \{f,\chi_D\})$ follows from the NP-hardness of 
	$\vcsp(\Gamma'\cup\{f|_D\})$. 
\end{proof}

\section{Conclusion and Outlook}
\label{sect:conclusion}
We have presented a polynomial-time algorithm for submodular PLH cost functions over the rationals. 
In fact, our algorithm not only decides the feasibility problem and whether there exists a solution
of cost at most $u_I$, but it can also be adapted to efficiently compute 
the infimum of the cost of all solutions (which might be $-\infty$),
and decides whether the infimum is attained. 
The modification is straightforward observing that the sample computed does not depend on the threshold $u_I$. 

We also showed that submodular PLH cost functions are \emph{maximally tractable} within the class of PLH cost functions. 
Such maximal tractability results are of particular importance for the more ambitious goal to classify
the complexity of the VCSP for all classes of
PLH cost functions: to prove a complexity dichotomy it suffices to identify \emph{all} maximally tractable classes. 

Another challenge is to extend our tractability
result to the class of all submodular \emph{piecewise linear} VCSPs. We believe that submodular 
piecewise linear VCSPs are in P, too.   
But note that already the 
%valued constraint language containing only the submodular 
%function $(\cdot 2)$ and $x \mapsto x+1$
structure $(\Q;0,S,D)$ 
where $S := \{(x,y) \mid y = x+1\}$
and $D := \{(x,y) \mid y = 2x\}$ 
(which has both min and max as a polymorphism)
does \emph{not} admit
an efficient sampling algorithm (it is easy to see that for every $d \in {\mathbb N}$ every $d$-sample must have exponentially many vertices in $d$), so a different approach than the approach in this paper is needed.  
%\begin{conjecture}
%\end{conjecture}

\bibliographystyle{abbrv} 
\bibliography{local.bib}

% \printbibliography      

\end{document}